\pgfplotsset{compat=newest}
\newlength\figureheight	
\newlength\figurewidth	
\def\trp{{\mathsf T}}
\def\trsp{^\trp}
\def\inix{x_{0}}
\def\xs{\xi}
\def\xst{\xs(t)}
\def\dxst{\dot \xs(t)}
\def\Nu{p}
\def\XOT{\ensuremath{X_{[0,T]}}}
\def\XiOT{\ensuremath{{\Xi_{[0,T]}}}}
\def\XiOt{\ensuremath{{\Xi_{[0,t]}}}}
\def\fbg{\ensuremath{F}}
\def\fbgx{\ensuremath{F(x)}}
\def\fbgxt{\ensuremath{F(\xst)}}
\def\mglob{\ensuremath{K}}
\def\omglob{\ensuremath{\omega}}
\def\omstar{\ensuremath{\omega^*}}
\def\Nx{\ensuremath{n}}
\def\foralle{\quad\text{for all }}
\def\sklmox{\ensuremath{\mathcal S_{(K,L,M,\omglob;X)}}}
\def\sklmoxT{\ensuremath{\mathcal S_{(K,L,M_T,\omglob;X_T)}}}
\def\smo{\ensuremath{\mathcal S_{K,\omglob}}}
\def\stmo{\ensuremath{\mathcal S_{\tilde K,\omglob}}}
\def\sylop{\ensuremath{\mathcal P}}
\DeclareMathOperator{\vecop}{vec}
\def\lipca{\ensuremath{L}}
\def\undecrt{\ensuremath{\omega}}
\def\ungrwthc{\ensuremath{K}}
\def\tA{\ensuremath{\tilde A}}
\def\Rdeltapmo{\ensuremath{R_\Delta}}
\def\Qdelta{\ensuremath{Q_\Delta}}
\def\Pdelta{\ensuremath{P_\Delta}}
\def\Adelta{\ensuremath{A_\Delta}}
\def\tstar{\ensuremath{{t^*}}}
\def\sylrhs{C}
\DeclareMathOperator{\sep}{sep}
\def\sveco{\xi_1}
\def\svecf{\xi_4}
\def\svec{\begin{bmatrix} \sveco \\ \xi_2 \end{bmatrix}}
\def\dsvec{\dot {\begin{bmatrix} \sveco \\ \xi_2 \end{bmatrix}}}
\def\svecwu{\begin{bmatrix} \xi_1 \\ \xi_2\\ \xi_3\\ \xi_4\\ \xi_5 \end{bmatrix}}
\def\dsvecwu{\dot {\begin{bmatrix} \xi_1 \\ \xi_2 \\ \xi_3 \\ \xi_4 \\ \xi_5 \end{bmatrix}}}
\newcommand{\scipy}{\textsf{SciPy}}
\newcommand{\sdre}{\textsf{sdre}}
\newcommand{\sylvup}{\textsf{p-update}}
\def\trp{{\mathsf T}}
\def\trsp{^\trp}
\providecommand{\abs}[1]{\lvert#1\rvert}
\providecommand{\norm}[1]{\lVert#1 \rVert}
\providecommand{\inva}[1]{\text{~\textup{d}} #1}
\providecommand{\bbmat}{\begin{bmatrix}}
\providecommand{\ebmat}{\end{bmatrix}}
\begin{document}
\author{Peter Benner${}^*$ \and Jan Heiland\thanks{Computational Methods in Systems and Control Theory Group at the Max Planck Institute for Dynamics of Complex Technical Systems, Sandtorstra{\ss}e~1, D-39106 Magdeburg, Germany
(\href{mailto:heiland@mpi-magdeburg.mpg.de}{\nolinkurl{heiland@mpi-magdeburg.mpg.de}})} ~}
\title{Exponential Stability and Stabilization of Extended Linearizations via  Continuous Updates of Riccati Based Feedback}
\maketitle
{
\begin{abstract}
	Many recent works on stabilization of nonlinear systems target the case of locally stabilizing an unstable steady state solutions against small perturbation. In this work we explicitly address the goal of driving a system into a nonattractive steady state starting from a well developed state for which the linearization based local approaches will not work. Considering extended linearizations or state-dependent coefficient representations of nonlinear systems, we develop sufficient conditions for stability of solution trajectories. We find that if the coefficient matrix is uniformly stable in a sufficiently large neighborhood of the current state, then the state will eventually decay. Based on these analytical results we propose an update scheme that is designed to maintain the stabilization property of Riccati based feedback constant during a certain period of the state evolution. We illustrate the general applicability of the resulting algorithm for setpoint stabilization of nonlinear autonomous systems and its numerical efficiency in two examples.


\end{abstract}
}
\newtheorem{thm}{Theorem}[section]
\newtheorem{cor}[thm]{Corollary}
\newtheorem{lem}[thm]{Lemma}
\newtheorem{ass}[thm]{Assumption}

\theoremstyle{remark}
\newtheorem{exa}[thm]{Example}

\theoremstyle{remark}
\newtheorem{rem}[thm]{Remark}

\theoremstyle{definition}
\newtheorem{defin}[thm]{Definition}

\theoremstyle{plain}
\newtheorem{prob}[thm]{Problem}

\newtheoremstyle{named}{}{}{\itshape}{}{\bfseries}{.}{.5em}{#1 \thmnote{#3}}
\theoremstyle{named}
\newtheorem*{namedproblem}{Problem}

\theoremstyle{plain}
\newtheorem{prop}[thm]{Proposition}

\theoremstyle{plain}
\newtheorem{alg}[thm]{Algorithm}

\floatstyle{ruled}
\newfloat{algorithm}{h}{loa}[section]
\floatname{algorithm}{Algorithm}

\section{Introduction}

We consider the general task to find an input $u$ that drives the state $\zeta$ of a nonlinear autonomous input-affine system of type
\begin{equation*}
	\dot \zeta(t) = f(\zeta(t))+Bu(t), \quad \zeta(0) = z \in \mathbb R^{\Nx} ,
\end{equation*}
towards a steady state $z^*$, i.e. a state $z^*$ for which $f(z^*)=0$. This problem is commonly known as \emph{set point stabilization}. It is equivalent to considering $\xi = \zeta - z^*$ and the task to drive the difference state $\xi$, that satisfies 
\begin{equation} \label{eq:nonlsysdiff}
	\dxst = \tilde f(\xst) + Bu ,\quad  \xi(0) = \inix,
\end{equation}
to zero, where $\tilde f(\xst) :=f(\xst + z^*)$ and $\inix=z-z^*$. If $f$ is Lipshitz continuous and since $\tilde f(0) = 0$, there exists \cite{Cim12} a matrix valued function $A\colon \mathbb R^{\Nx} \to \mathbb R^{\Nx, \Nx}$ such that \eqref{eq:nonlsysdiff} can be written as 
\begin{equation}
	\dxst = A(\xs(t))\xs(t) + Bu(t), \quad \xs(0)= \inix.
	\label{eq:extlinsys}
\end{equation}

Thus, \emph{extended linearizations} or \emph{state dependent coefficient} (SDC) systems like \eqref{eq:extlinsys} are a suitable starting point for general nonlinear set point stabilization problems. Then the question is, how to define a feedback gain $\fbg(\xs(t))$ such that solutions of the closed loop system
\begin{align}
	\dot \xs(t) &= [A(\xs(t))-B\fbgxt]\xs(t), \quad \xs(0)= \inix, \label{eq:clsys}
	\intertext{or, equivalently,}
	\dot \zeta(t) &= f(\zeta(t))-BF(\zeta(t)-z^*)[\zeta(t)-z^*], \quad \zeta(0) = z, \notag
\end{align}
decay asymptotically to zero or to $z^*$, respectively.  One approach is to define the feedback gain as $\fbgx = R^{-1}B^TP(x)$ for a given state $x =\xst$, where $P(x)$ is the solution to the \emph{state dependent Riccati equation (SDRE)}
\begin{equation}\label{eq:sdre}
	P(x)A(x) + A\trsp P(x) - P(x)BR^{-1}B\trsp P(x) + Q = 0,
\end{equation}
for given weighting matrices $R\succ0$ and $Q\succcurlyeq0$. 

Known results \cite{BanLT07, Cim12, MraC98} on the stabilization via SDRE feedback base on the assumption that the initial state $\inix$ is close to zero such that the nonlinear terms are but a perturbation of a linear system which can then be stabilized. Precisely, one considers the SDRE \eqref{eq:sdre} for the extended linear system \eqref{eq:extlinsys} and defines $P(x) =: P(\inix) + \Delta P(x)$ and $A(x) =: A(\inix) + \Delta A(x)$. Then, if $\fbg_0:= B\trsp P(\inix)$ is stabilizing for $A_0 := A(\inix)$ and if the considered matrix functions are Lipshitz continuous in $x$, then one can show that the solution to 
\begin{equation*}
	\dxst = (A_0 - B\fbg_0)\xst + h(t), \quad \xs(0) = \inix,
\end{equation*}
where $h(t) := (\Delta A(\xs(t)) + BB^T\Delta P(\xs(t)))\xs(t)$, goes to zero as $t\to \infty$ with an exponential decay rate \cite{BanLT07}, provided that $\inix$ is sufficiently small.

Our goal, however, is to drive a system from a developed state towards the zero state, which contradicts the smallness assumption on the initial value. Once the system's state is close to the origin, stabilization strategies that base on smallness of the deviation from the zero state and that have been proven successful can be applied; see \cite{BaeBSetal15, BenH15, BreK14} for numerical studies considering nonlinear PDEs and \cite{Ray06} for a theoretical analysis. For completeness, we mention the earlier works on feedback synthesis for nonlinear systems based on extended linearizations \cite{BauR86, Rug84}, where families of feedback gains parametrized by set points of the considered plants were considered. There again, the analysis of the stabilizing properties base on smallness of the deviations from the targeted operating points.

The manuscript is organized as follows. In Section \ref{sec:extlinstab}, we extend the results that were reported in \cite{HilI11} on stability of linear time-varying systems like
\begin{equation*}
	\dot \xs(t) = \tilde A(t) \xs(t),
\end{equation*}
to give sufficient conditions for stability of SDC
systems like system \eqref{eq:extlinsys}. The basic idea is that for a given trajectory $\xs$, one can consider $\tilde A(t):=A(\xs(t))$. However, this approach leads to sufficient conditions that are very restrictive and probably not easy to confirm for most applications. In view of practical use, in Section \ref{sec:locextlinstab} we provide localized conditions taking advantage of the observation that with controlling the state $\xs$, one also controls the coefficients. By means of an example, we show the practicability of the derived estimates.

The general result is that one can achieve an exponential decay of the solutions if, at a fixed state $x$, the local transient behavior is well balanced with the decay rate of the current coefficient $A(x)$ and if this balance holds true uniformly in a sufficiently large neighborhood.  In Section \ref{sec:stabsdrefb}, we will introduce conditions and an algorithm for a feedback gain $\fbg$ that ensures uniform bounds on the transitive behavior and a constant decay rate in a neighborhood of the current state via continuously updating an initial feedback. The resulting algorithm is theoretically well founded and generally applicable for set point control of any nonlinear autonomous system that can be written in SDC form. In Section \ref{sec:numexa} we investigate the proposed update scheme for two numerical examples and show its feasibility and efficiency in comparison to the SDRE feedback. We conclude with summarizing remarks and an outlook.

\section{Stability of State-dependent Coefficient Systems} \label{sec:extlinstab}

To describe exponential stability for the considered type of SDC systems
\begin{equation}\label{eq:extlinsysplain}
	\dot \xs(t) = A(\xs(t))\xs(t), 
\end{equation}
we adjust the definition for time varying systems as given in \cite[Def. 6.5]{Rug96}.
\begin{defin}\label{def:uniexpstable}
	System \eqref{eq:extlinsysplain} is called \emph{uniformly exponentially stable} if there exist positive constants $\mglob$ and $\omglob$ such that for any $x_0\in \mathbb R$, a solution $\xs$ of \eqref{eq:extlinsysplain} with $\xs(0)=\inix$ satisfies
	\begin{equation} \label{eq:uniexpstab}
		\norm{\xs(t)} \leq \mglob e^{-\omglob t}\norm{\inix}, \quad \text{for }t\geq 0.
	\end{equation}
	It is called \emph{uniformly exponentially stable on $X$}, if, for some $X\subset\mathbb R^{\Nx}$, relation \eqref{eq:uniexpstab} holds for any $\inix \subset X$.
\end{defin}

Note that the definition in \cite{Rug96} is for linear systems but \eqref{eq:uniexpstab} solely bases on solution trajectories and, thus, applies also for nonlinear systems. 

\begin{ass}\label{ass:contiHomoBounded}
	Regarding equation \eqref{eq:extlinsysplain}, we have that

	\begin{itemize}
		\item[(1)] the map $A\colon \mathbb R^{\Nx}\to \mathbb R^{\Nx, \Nx}$ is Lipshitz-continuous,
		\item[(2)] there is a bounded set $X \subset \mathbb R^{\Nx}$ such that $\xs(t)\in X$, for $t\geq 0$, where $\xs$ is a solution to \eqref{eq:extlinsysplain}, with $\xs(0)=\inix \in X$.
	\end{itemize}
\end{ass}

The following lemma states that in order to state exponential stability for trajectories that start in $X$, the existence of a global unique solution is a necessary prerequisite.

\begin{lem}
	Consider equation \eqref{eq:extlinsysplain} and let Assumption \ref{ass:contiHomoBounded} hold. Then, for any $\inix\in X$, there is a unique solution solution $\xs \colon [0, \infty) \to \mathbb R^{\Nx}$ to \eqref{eq:extlinsysplain} with $\xs(0)=\inix$. 
\end{lem}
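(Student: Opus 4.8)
The plan is to invoke the classical Picard--Lindelöf theorem for the vector field $g(x) := A(x)x$ and then to rule out finite-time blow-up using the invariance of the bounded set $X$ from Assumption \ref{ass:contiHomoBounded}(2). First I would check that $g$ is locally Lipschitz on $\mathbb R^{\Nx}$: since $A$ is globally Lipschitz by Assumption \ref{ass:contiHomoBounded}(1), it obeys the linear growth bound $\norm{A(x)} \le \norm{A(0)} + \lipca\norm{x}$ with $\lipca$ the Lipschitz constant, so on every ball both $A$ and the identity map are bounded and Lipschitz, hence so is their product $g$. Picard--Lindelöf then yields, for each $\inix\in X$, a unique solution $\xs$ on a maximal interval of existence $[0,\tstar)$ with $\tstar\in(0,\infty]$, and local uniqueness propagates to uniqueness on all of $[0,\tstar)$ by the usual connectedness argument (the set of times at which two solutions with the same initial value coincide is nonempty, relatively open and relatively closed).

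The second step is to show $\tstar=\infty$. Reading Assumption \ref{ass:contiHomoBounded}(2) as an invariance statement for the maximal solution gives $\xs(t)\in X$ for all $t\in[0,\tstar)$. As $X$ is bounded, there is $R>0$ with $X$ contained in the ball of radius $R$, so $\norm{\dot\xs(t)} = \norm{g(\xs(t))} \le (\norm{A(0)} + \lipca R)R =: \sylrhs$ for all $t<\tstar$. If $\tstar$ were finite, this uniform bound on $\dot\xs$ would make $\xs$ Lipschitz in $t$, hence uniformly continuous, so $\xs(t)$ would converge to some limit in $\bar X$ as $t\uparrow\tstar$; restarting the equation from that limit would extend $\xs$ past $\tstar$, contradicting maximality. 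Equivalently, the standard blow-up alternative asserts $\limsup_{t\uparrow\tstar}\norm{\xs(t)}=\infty$ whenever $\tstar<\infty$, which is impossible since $\xs$ stays in the bounded set $X$. Therefore the unique solution is defined on all of $[0,\infty)$.

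The one point that needs care, and the only genuine obstacle, is the interpretation of Assumption \ref{ass:contiHomoBounded}(2): it must be understood as a property of the maximal solution on $[0,\tstar)$ rather than as presupposing a global solution on $[0,\infty)$, since otherwise the argument would be circular. With that reading fixed, the invariance of $X$ feeds directly into the a priori bound and the blow-up criterion, and the remainder is routine ODE theory; I would spell out the connectedness argument for global uniqueness and the limit-at-$\tstar$ argument only as briefly as the paper's style allows.
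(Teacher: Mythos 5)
Your proposal is correct and follows essentially the same route as the paper's proof: local Lipschitz continuity of $x\mapsto A(x)x$, Picard--Lindel\"of for local existence and uniqueness, and the boundedness of the invariant set $X$ to rule out finite escape time and extend the solution globally. Your remark that Assumption~\ref{ass:contiHomoBounded}(2) must be read as a statement about the maximal solution to avoid circularity is a fair and worthwhile clarification that the paper leaves implicit.
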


\begin{proof}
	By Lipshitz-continuity of $A$, it follows that $x\mapsto A(x)x$ is locally Lipshitz continuous. Accordingly, by the \emph{Picard-Lindel\"of} theorem, there exists a unique solution $\xs$ locally in time. Since, by assumption, $\xs$ stays in the bounded set $X$, it can be extended to a global solution. 
\end{proof}

We introduce a class of SDC matrices similar to the class of time-dependent coefficient matrices used in \cite{HilI11} via the following assumption.
\begin{ass}\label{ass:unilipunidecA}
	For a given bounded set $X \subset \mathbb R^{\Nx}$, the function $A\colon X \to \mathbb R^{\Nx,\Nx}$ is \emph{Lipshitz continuous}, i.e. there exists a constant $\lipca\in\mathbb R^{}$ such that
	\begin{equation} \label{eq:lipcontA}
		\norm{A(x_1) - A(x_2)} \leq \lipca \norm{x_1 - x_2}, \foralle x_1,x_2\in X,
	\end{equation}
	and \emph{uniformly stable} on $X$, i.e. there exist constants \undecrt, $\ungrwthc \in \mathbb R^{}_{>0}$ such that 
	\begin{equation} \label{eq:unigrowrateA}
		\norm{e^{A(x)s}} \leq \ungrwthc e^{-\undecrt s}, \foralle x\in X \text{ and for }t>0.
	\end{equation}
\end{ass}

\begin{lem}\label{lem:sollipcont}
	Consider equation \eqref{eq:extlinsysplain} and let Assumption \ref{ass:contiHomoBounded} and Assumption \ref{ass:unilipunidecA} hold. Then $$M:=\sup_{x\in X}\norm{A(x)x} < \infty$$ and any solution to \eqref{eq:extlinsysplain} that starts in $X$ is Lipshitz continuous with Lipshitz constant $M$.
\end{lem}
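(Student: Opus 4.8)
The plan is to treat the two assertions separately: first the finiteness of $M$, which follows from the boundedness of $X$ together with the Lipshitz continuity of $A$, and then the Lipshitz regularity of the solution, which follows by integrating \eqref{eq:extlinsysplain} once it is known that trajectories cannot leave $X$.

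For $M<\infty$ I would fix an arbitrary reference point $\bar x\in X$ and, for any $x\in X$, bound $\norm{A(x)}\le\norm{A(\bar x)}+\lipca\norm{x-\bar x}$ using \eqref{eq:lipcontA}. Since $X$ is bounded, both $\sup_{x\in X}\norm{x-\bar x}$ and $r:=\sup_{x\in X}\norm{x}$ are finite, hence $\sup_{x\in X}\norm{A(x)}$ is finite as well. Submultiplicativity of the operator norm then gives $\norm{A(x)x}\le\norm{A(x)}\,\norm{x}\le r\,\sup_{x\in X}\norm{A(x)}$ for every $x\in X$, so that $M=\sup_{x\in X}\norm{A(x)x}<\infty$.

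For the Lipshitz bound, let $\xs$ be a solution of \eqref{eq:extlinsysplain} with $\xs(0)=\inix\in X$. By the preceding lemma it exists on all of $[0,\infty)$, and by Assumption \ref{ass:contiHomoBounded}(2) we have $\xs(t)\in X$ for all $t\ge 0$. Therefore $\norm{\dot\xs(\tau)}=\norm{A(\xs(\tau))\xs(\tau)}\le M$ for every $\tau\ge 0$, and writing $\xs$ in integrated form one obtains, for $0\le s\le t$, $\norm{\xs(t)-\xs(s)}=\norm{\int_s^t\dot\xs(\tau)\,d\tau}\le\int_s^t\norm{\dot\xs(\tau)}\,d\tau\le M(t-s)$, i.e.\ $\xs$ is Lipshitz continuous with constant $M$.

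I do not expect a genuine obstacle here. The only point requiring a little care is that $A$ is assumed Lipshitz only on $X$ and $X$ is not assumed closed, so the uniform bound on $\norm{A(\cdot)}$ must be extracted from Lipshitz continuity plus boundedness of $X$ rather than from continuity on a compact set; the rest is just the fundamental theorem of calculus applied to \eqref{eq:extlinsysplain}. It is also worth noting that only the Lipshitz part \eqref{eq:lipcontA} of Assumption \ref{ass:unilipunidecA} enters the argument, while the uniform-stability estimate \eqref{eq:unigrowrateA} is not needed for this lemma.
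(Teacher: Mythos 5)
Your argument is correct and follows essentially the same route as the paper: boundedness of $\norm{A(\cdot)}$ on $X$ via Lipshitz continuity plus boundedness of $X$, then the integral representation of the trajectory to get the Lipshitz bound $M$. You merely spell out the first step (reference point plus triangle inequality) that the paper leaves implicit.
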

\begin{proof}
	Since $A$ is Lipshitz continuous and $X$ is bounded, $\norm{A(x)}$ and, thus, $\norm{A(x)x}$ is bounded away from $\infty$ for all $x\in X$.
	By assumption, a solution $\xs$ to \eqref{eq:extlinsysplain} that starts in $X$ stays in $X$ so that we can estimate
	\begin{equation} \label{eq:lcontix}
		\norm{\xs(t_2) - \xs(t_1)} = \norm{\int_{t_1}^{t_2}\dot \xs(s) \inva s} =
		\norm{\int_{t_1}^{t_2}A(\xs(s))\xs(s) \inva s} \leq M \abs{t_2 - t_1},
	\end{equation}
	for $t_1$, $t_2 > 0$.
\end{proof}

By virtue of Lemma \ref{lem:sollipcont}, the following definition, which we use for later reference, is well posed. 

\begin{defin}\label{def:classsklmox}
	The matrix-valued function $A\colon X \subset \mathbb R^{n}\to \mathbb R^{n,n}$ is an element of the class $\sklmox$ for some constants $K$, $L$, $M$, $\omega$ and a bounded set $X$, if $A$ and $K$, $L$, and $\omega$ are such that Assumption \ref{ass:contiHomoBounded} and Assumption \ref{ass:unilipunidecA} hold on $X$ and if $\sup_{x\in X}\norm{A(x)x} \leq M$.
\end{defin}

We can now provide an estimate on the exponential growth of solutions of the SDC system \eqref{eq:extlinsysplain}.

\begin{thm}\label{thm:expstabsolsplain}
	Consider Equation \eqref{eq:extlinsysplain} and let Assumption \ref{ass:contiHomoBounded} and Assumption \ref{ass:unilipunidecA} hold. Then, for any $\inix\in X$, the unique solution solution $\xs \colon [0, \infty) \to \mathbb R^{\Nx}$ to \eqref{eq:extlinsysplain} with $\xs(0)=\inix$ satisfies
	\begin{equation} \label{eq:expstabsolsplain}
		\norm{\xs(t)} \leq K e^{t\cdot (\sqrt{KLM \log 2}-\omega)}\norm{\inix}, \foralle t>0,
	\end{equation}
	where $M:=\sup_{x\in X}\norm{A(x)x}$.
\end{thm}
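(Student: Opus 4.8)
The plan is to compare the true solution $\xs$ to a "frozen-coefficient" solution on short subintervals, exploiting the uniform decay estimate \eqref{eq:unigrowrateA} together with the Lipschitz bound on the trajectory from Lemma \ref{lem:sollipcont}. Fix a small step $\Delta>0$ to be chosen later, and on an interval $[t_0,t_0+\Delta]$ write $\xs(t) = e^{A(\xs(t_0))(t-t_0)}\xs(t_0) + \int_{t_0}^{t}e^{A(\xs(t_0))(t-s)}\bigl[A(\xs(s))-A(\xs(t_0))\bigr]\xs(s)\,\text{d} s$ via the variation-of-constants formula applied to $\dot\xs = A(\xs(t_0))\xs + \bigl[A(\xs(t))-A(\xs(t_0))\bigr]\xs$. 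Using \eqref{eq:unigrowrateA} for the semigroup bound, \eqref{eq:lipcontA} for the coefficient difference, and \eqref{eq:lcontix} to get $\norm{\xs(s)-\xs(t_0)}\le M(s-t_0)\le M\Delta$, the perturbation term is controlled by $\int_{t_0}^{t}\ungrwthc e^{-\undecrt(t-s)}\ungrwthc\lipca M\Delta\,\norm{\xs(t_0)}\,\text{d} s$ plus lower-order corrections, so that one obtains a discrete one-step estimate of the shape $\norm{\xs(t_0+\Delta)} \le \ungrwthc e^{-\undecrt\Delta}\bigl(1 + c\,\Delta\bigr)\norm{\xs(t_0)}$ for a constant $c$ proportional to $\ungrwthc\lipca M$.

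Iterating this over $t/\Delta$ steps gives $\norm{\xs(t)}\le \bigl[\ungrwthc e^{-\undecrt\Delta}(1+c\Delta)\bigr]^{t/\Delta}\norm{\inix}$, and the task becomes to optimize the bracket over $\Delta$. Taking logarithms, the exponential rate is $\tfrac1\Delta\bigl(\log\ungrwthc - \undecrt\Delta + \log(1+c\Delta)\bigr) \approx \tfrac{\log\ungrwthc}{\Delta} - \undecrt + c$, which is not yet of the advertised form; the factor $\ungrwthc$ in front of the semigroup bound must be absorbed more carefully. The correct bookkeeping — as in \cite{HilI11} — is to not pay the full $\ungrwthc$ at every step but only once, or equivalently to group the steps so that the $\log\ungrwthc$ penalty is balanced against the number of steps: choosing $\Delta$ so that $\ungrwthc e^{-\undecrt\Delta}(1+c\Delta)\le 2^{-1}\cdot(\text{something})$ is where the $\log 2$ enters. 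Concretely, I expect one sets the step so that over a block the accumulated contraction is at worst a fixed factor (say $\tfrac12$) while the per-step amplification $(1+c\Delta)$ stays controlled; optimizing the trade-off between "how many steps to beat the $\ungrwthc$" and "how much $(1+c\Delta)$ costs" yields an effective rate $\sqrt{\ungrwthc\lipca M\log 2}-\undecrt$, with the leading $\ungrwthc$ surviving as the prefactor in \eqref{eq:expstabsolsplain}.

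The main obstacle is precisely this optimization of the step size $\Delta$ and the careful accounting that turns the naive geometric bound $[\ungrwthc e^{-\undecrt\Delta}(1+c\Delta)]^{t/\Delta}$ into the claimed form with the square-root rate — in particular, keeping track of where the $\log 2$ comes from (it is the threshold chosen for the per-block contraction) and verifying that the constant inside the square root is exactly $\ungrwthc\lipca M$ and not some other combination. The individual ingredients (variation of constants, the three uniform bounds, the telescoping product) are routine; assembling them with the sharp constant, following the linear time-varying argument of \cite{HilI11} adapted to the SDC setting where the "time-variation" of the coefficient is itself bounded through the state via Lemma \ref{lem:sollipcont}, is the crux.
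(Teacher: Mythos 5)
Your setup is right as far as it goes --- the variation-of-constants representation with a frozen coefficient, the three bounds \eqref{eq:unigrowrateA}, \eqref{eq:lipcontA}, \eqref{eq:lcontix}, and the reduction to controlling the time-variation of $t\mapsto A(\xs(t))$ through $LM$ --- and this is indeed how the paper's Lemma \ref{lem:solestimate} is proved. But the part you defer as ``the crux'' is the actual content of the theorem, and the route you sketch for it does not produce the stated constant. If you freeze the coefficient at the left endpoint of each subinterval of length $\Delta$ and telescope, the one-step bound is $\norm{\xs(t_0+\Delta)}\le K e^{-\omega\Delta}e^{KLM\Delta^2/2}\norm{\xs(t_0)}$ (Gronwall applied to your perturbation integral), and iterating over $t/\Delta$ steps gives a rate $\tfrac{\log K}{\Delta}+\tfrac{KLM\Delta}{2}-\omega$; optimizing over $\Delta$ yields $\sqrt{2KLM\log K}-\omega$, not $\sqrt{KLM\log 2}-\omega$. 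These agree only when $K=\sqrt{2}$, and for $K\ge 2$ (e.g.\ $K\approx 2.38$ in the paper's example) the blocking bound is strictly weaker. Your suggestion to ``pay the $\log K$ only once'' cannot be made to work either: the transient factor $K$ genuinely recurs every time the coefficient is re-frozen, which is exactly why the rate scales like $\sqrt{KLM}$.

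The paper avoids the telescoping entirely. In Lemma \ref{lem:solestimate} the freezing time $\rho$ is kept as a \emph{free parameter} in a single integral inequality \eqref{eq:priminineq} over all of $[0,t]$; Lemma \ref{lem:scalethexi} rescales time by $\sqrt{KLM}$ and the solution by $K\norm{\xs(0)}$ to obtain the normalized inequality \eqref{eq:scaledintineq}, in which $\rho$ may depend on $t$ through a function $r(t)$; and the comparison Lemma \ref{lem:intsupsol} is then applied with the explicit supersolution $v_2(t)=e^{t\sqrt{\log 2}}$ and the choice $r_2(t)=\max\{0,\,t-\sqrt{\log 2}\}$ (this pair satisfies \eqref{eq:intsupsol} by \cite[Lem.~5.8]{HilI11}). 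The $\log 2$ is thus a property of that supersolution, not a halving threshold chosen per block, and the sharper constant comes from choosing $\rho$ near the \emph{current} time $t$ rather than at the start of a block. To complete your proof you would either have to import this supersolution argument (at which point you are reproducing the paper's proof) or accept the weaker exponent $\sqrt{2KLM\log K}$, which does not establish \eqref{eq:expstabsolsplain} as stated.
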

\begin{cor}
	Under the assumptions of Theorem \ref{thm:expstabsolsplain}, if 
	\begin{equation}\label{eq:condunistab}
		KLM \log 2< \omega^2,
	\end{equation}
	then system \eqref{eq:extlinsysplain} is uniformly exponentially stable on $X$ as defined in Definition \ref{def:uniexpstable}.
\end{cor}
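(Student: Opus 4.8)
The plan is to \emph{freeze} the coefficient along the prescribed trajectory, turning the problem into one about a linear time-varying system whose coefficient varies at a controlled rate, and then to run the freezing/Gr\"onwall estimate behind \cite{HilI11}. Given the unique global solution $\xs$ of \eqref{eq:extlinsysplain} with $\xs(0)=\inix$ supplied by the preceding lemmas, set $\tilde A(t):=A(\xs(t))$, so that $\xs$ also solves the linear time-varying equation $\dot\xs(t)=\tilde A(t)\xs(t)$. By Lemma \ref{lem:sollipcont} the solution is Lipschitz with constant $M=\sup_{x\in X}\norm{A(x)x}<\infty$, and since $A$ is $L$-Lipschitz on $X$ (Assumption \ref{ass:unilipunidecA}), the coefficient path $t\mapsto\tilde A(t)$ is Lipschitz in $t$ with constant $LM$; moreover $\xs(t)\in X$ for all $t\ge0$ (Assumption \ref{ass:contiHomoBounded}), so \eqref{eq:unigrowrateA} gives the uniform transient bound $\norm{e^{\tilde A(\tau)s}}\le Ke^{-\omega s}$ for every $\tau\ge0$, $s>0$.

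For the core estimate I would fix a mesh width $h>0$ and work on a generic subinterval $[\tau,\tau+h]$. Writing $\tilde A(t)=\tilde A(\tau)+\bigl(\tilde A(t)-\tilde A(\tau)\bigr)$ and applying variation of constants with the \emph{frozen} matrix $\tilde A(\tau)$,
\[ \xs(t)=e^{\tilde A(\tau)(t-\tau)}\xs(\tau)+\int_\tau^t e^{\tilde A(\tau)(t-s)}\bigl(\tilde A(s)-\tilde A(\tau)\bigr)\xs(s)\inva s , \]
one bounds the semigroup by $K e^{-\omega(\cdot)}$ and the coefficient increment by $\norm{\tilde A(s)-\tilde A(\tau)}\le LM(s-\tau)$; multiplying through by $e^{\omega(t-\tau)}$ and applying Gr\"onwall's inequality to $\phi(t):=e^{\omega(t-\tau)}\norm{\xs(t)}$ yields the one-step bound
\[ \norm{\xs(\tau+h)}\le K\exp\!\Bigl(\tfrac12 KLM h^2-\omega h\Bigr)\norm{\xs(\tau)} \]
(the quadratic power of $h$ being exactly what comes out of integrating the linear-in-time increment $LM(s-\tau)$), together with the companion estimate $\sup_{[\tau,\tau+h]}\norm{\xs}\le K\exp(\tfrac12 KLM h^2)\norm{\xs(\tau)}$ for the excursion within a step.

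Chaining the one-step bound over $\lfloor t/h\rfloor$ consecutive subintervals and one leftover interval gives a global bound of the shape $\norm{\xs(t)}\le K\exp(\tfrac12 KLM h^2)\cdot\bigl(K\exp(\tfrac12 KLM h^2-\omega h)\bigr)^{\lfloor t/h\rfloor}\norm{\inix}$, so that, up to the fixed prefactor and the floor, the exponential rate equals $\tfrac1h\log K+\tfrac12 KLM h-\omega$. It then remains to optimise over the mesh width $h$: balancing the two $h$-dependent contributions — equivalently, a careful choice of $h$ together with the sharper per-step constant obtained by carrying the computation out in a quadratic norm adapted to each $\tilde A(\tau)$ (in which the frozen semigroup is a genuine contraction of rate $\omega$) — collapses the rate to $\sqrt{KLM\log2}-\omega$ and leaves the leading constant $K$, which is \eqref{eq:expstabsolsplain}. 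The Corollary follows at once, since $KLM\log2<\omega^2$ makes $\sqrt{KLM\log2}-\omega<0$, and \eqref{eq:expstabsolsplain} then is precisely the bound \eqref{eq:uniexpstab} on $X$ required by Definition \ref{def:uniexpstable}.

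The conceptual steps — reduce to the linear time-varying case, apply one frozen semigroup per subinterval, invoke Gr\"onwall, chain, and optimise the mesh — are routine once the reduction is set up. The place where the actual effort goes is the constant bookkeeping in the previous paragraph: getting the exact $\bigl(\sqrt{\,\cdot\,},\log2\bigr)$ form out of the optimisation, and handling the leftover fractional subinterval at the end of $[0,t]$ without spoiling either the constant $K$ or the decay rate. The single substantive analytic ingredient — that the frozen-coefficient perturbation $\tilde A(s)-\tilde A(\tau)$ is small over a short time — is delivered for free by the Lipschitz regularity of the solution (Lemma \ref{lem:sollipcont}), which itself rests on the standing Assumption \ref{ass:contiHomoBounded}; without a global, bounded, Lipschitz solution the scheme does not get off the ground.
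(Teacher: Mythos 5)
Your closing deduction is exactly what the paper intends for this corollary: the hypothesis $KLM\log 2<\omega^2$ gives $\sqrt{KLM\log 2}-\omega<0$, so the bound \eqref{eq:expstabsolsplain} of Theorem \ref{thm:expstabsolsplain} is already of the form \eqref{eq:uniexpstab}, with constants $K$ and $\omega-\sqrt{KLM\log 2}>0$ uniform over $\inix\in X$. Had you stopped there and simply invoked the theorem, whose hypotheses you are explicitly granted, the proof would be complete.

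The difficulty lies in the bulk of your argument, which tries to re-derive \eqref{eq:expstabsolsplain} by freezing the coefficient at the left endpoint of each mesh interval, applying Gr\"onwall per step, and chaining. That scheme does not produce the constant $\sqrt{KLM\log 2}$. Each step costs a prefactor $K$, so the chained rate is $\tfrac1h\log K+\tfrac12 KLMh-\omega$, whose optimum over $h$ is $\sqrt{2KLM\log K}-\omega$ (freezing at the midpoint of each interval improves this only to $\sqrt{KLM\log K}-\omega$). For $K>\sqrt2$ --- the generic case, since $K\geq 1$ always and, e.g., $K\approx 2.38$ in the paper's own example --- this is strictly worse than $\sqrt{KLM\log2}-\omega$, so the hypothesis $KLM\log2<\omega^2$ would not guarantee a negative rate along your route. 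The sentence claiming that the optimisation, possibly aided by quadratic norms adapted to each $\tilde A(\tau)$, ``collapses'' the rate to $\sqrt{KLM\log2}-\omega$ is precisely the unproven step: switching Lyapunov norms between consecutive subintervals reintroduces equivalence constants tied to $K$, and you have not shown that these do not accumulate. The paper avoids chaining altogether: in Lemma \ref{lem:solestimate} the freezing point $\rho$ is allowed to depend on the final time $t$, and after the rescaling of Lemma \ref{lem:scalethexi} one compares $\zeta$ on all of $[0,t]$ against the supersolution $v_2(t)=e^{t\sqrt{\log2}}$ with $r_2(t)=\max\{0,\,t-\sqrt{\log2}\}$, so the factor $K$ is paid exactly once and $\log 2$ emerges as the universal threshold for which $e^{ct}$ dominates $1+\int_0^t\abs{s-r(t)}e^{cs}\inva s$. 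Either cite the theorem and be done, or reproduce that global comparison; the local chaining as written does not reach the stated hypothesis.
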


To prove Theorem \ref{thm:expstabsolsplain} we extend the arguments used in \cite{HilI11} to prove this result for linear time-varying systems. The basic idea is that for a given trajectory $x$, the state-dependent coefficient $A$ can be considered as a time-dependent coefficient $\tA(t) := A(x(t))$. We repeat the basic steps of the proof for time-dependent linear systems, to show how the arguments extend to state-dependent coefficient matrices.

\begin{lem}[Lem. 5.2, \cite{HilI11}]\label{lem:solestimate}
	Suppose that $A\in\sklmox$. Then for any $t$, $\rho \geq 0$, every solution $\xs$ of \eqref{eq:extlinsysplain} with $\xs(0) \in X$ satisfies
	\begin{equation} \label{eq:priminineq}
		\norm{\xs(t)} \leq Ke^{-\omega t}\norm{\xs(0)} + KLM\int_0^t \abs{s - \rho} e^{-\omega (t-s)}\norm{\xs(s)}\inva s.
	\end{equation}
\end{lem}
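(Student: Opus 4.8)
The plan is to carry over the frozen-coefficient argument from the linear time-varying setting of \cite{HilI11}, exploiting that along a fixed solution $\xi$ the state-dependent coefficient becomes the continuous time-dependent coefficient $\tA(t):=A(\xi(t))$. Fix a solution $\xi$ with $\xi(0)\in X$ and an arbitrary $\rho\ge 0$. By Assumption \ref{ass:contiHomoBounded}(2) we have $\xi(s)\in X$ for all $s\ge 0$, so in particular $A_\rho:=A(\xi(\rho))$ is defined and, by the uniform stability part of Assumption \ref{ass:unilipunidecA}, satisfies $\norm{e^{A_\rho s}}\le Ke^{-\omega s}$ for $s\ge 0$. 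I would then split off the frozen principal part and write \eqref{eq:extlinsysplain} as the linear time-varying system
$$\dot\xi(t) = A_\rho\,\xi(t) + \bigl(A(\xi(t)) - A_\rho\bigr)\xi(t),$$
which is legitimate because $t\mapsto A(\xi(t))$ is continuous (composition of the continuous $\xi$ with the Lipschitz map $A$), so the added term is an admissible continuous forcing.

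Next I would apply the variation-of-constants formula with $A_\rho$ as homogeneous generator,
$$\xi(t) = e^{A_\rho t}\xi(0) + \int_0^t e^{A_\rho(t-s)}\bigl(A(\xi(s)) - A(\xi(\rho))\bigr)\xi(s)\inva s ,$$
take norms, and insert the uniform decay bound $\norm{e^{A_\rho(t-s)}}\le Ke^{-\omega(t-s)}$, which is valid since $t-s\ge 0$ on the integration range $[0,t]$. This gives
$$\norm{\xi(t)} \le Ke^{-\omega t}\norm{\xi(0)} + K\int_0^t e^{-\omega(t-s)}\,\norm{A(\xi(s)) - A(\xi(\rho))}\,\norm{\xi(s)}\inva s .$$
Then I would bound the coefficient difference via Lipschitz continuity of $A$ on $X$ (Assumption \ref{ass:unilipunidecA}), $\norm{A(\xi(s)) - A(\xi(\rho))}\le L\norm{\xi(s)-\xi(\rho)}$, and finally invoke Lemma \ref{lem:sollipcont}, by which the solution is Lipschitz with constant $M$, to get $\norm{\xi(s)-\xi(\rho)}\le M\abs{s-\rho}$. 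Combining these two estimates yields exactly \eqref{eq:priminineq}, and since $\rho\ge 0$ was arbitrary the inequality holds for every $\rho\ge 0$.

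I do not anticipate a genuine obstacle. The only points that need care are (i) justifying the variation-of-constants representation, which reduces to standard linear time-varying ODE theory once $\tA(t)=A(\xi(t))$ is recognized as a continuous coefficient, and (ii) checking that every point at which $A$, the decay estimate, or the Lipschitz bound on $\xi$ is used — namely $\xi(s)$ and $\xi(\rho)$ for $s,\rho\ge 0$ — lies in the set $X$ on which Assumptions \ref{ass:contiHomoBounded} and \ref{ass:unilipunidecA} are posed; this is precisely Assumption \ref{ass:contiHomoBounded}(2) together with the a priori Lipschitz regularity from Lemma \ref{lem:sollipcont}. The content beyond \cite{HilI11} is thus limited to the observation that the time-dependent coefficient originates as $A(\xi(t))$, while the chain of estimates is unchanged.
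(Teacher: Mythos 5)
Your proposal is correct and follows essentially the same route as the paper's proof: freeze the coefficient at $A_\rho=A(\xs(\rho))$, rewrite \eqref{eq:extlinsysplain} as $\dot\xs = A_\rho\xs + (A(\xst)-A_\rho)\xs$, apply variation of constants, and then chain the uniform decay bound \eqref{eq:unigrowrateA}, the Lipschitz bound \eqref{eq:lipcontA}, and the solution Lipschitz bound from Lemma \ref{lem:sollipcont}. The additional care you take in verifying that $\xs(s),\xs(\rho)\in X$ and that $t\mapsto A(\xst)$ is continuous is implicit in the paper's argument and does not change the proof.
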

\begin{proof}
	For a given solution $\xs$ and $t$, $\rho \geq 0$, define $A_\rho:=A(\xs(\rho))$ and rewrite \eqref{eq:extlinsysplain} as 
	\begin{equation*}
		\dot \xs(t) = A_\rho \xs(t) + (A(\xs(t)) - A_\rho)\xs(t)
	\end{equation*}
	to get the following representation of $\xs$:
	\begin{equation*}
		\xs(t) = e^{A_\rho t}\xs(0) + \int_0^t e^{A_\rho(t-s)}(A(\xs(s)) - A_\rho)\xs(s)\inva s.
	\end{equation*}
	Then, taking the norm and using the estimates \eqref{eq:lipcontA}, \eqref{eq:unigrowrateA}, and \eqref{eq:lcontix}, namely the Lipshitz continuity of $A$, the stability of $A_\rho$, and the Lipshitz continuity of $\xs$, we estimate that

	\begin{align}
		\xs(t) &\leq \norm{e^{A_\rho t}}\norm{\xs(0)} + \int_0^t \norm{e^{A_\rho(t-s)}}\norm{(A(\xs(s)) - A_\rho)}\norm{\xs(s)}\inva s \notag \\
		&\leq Ke^{-\omega t}\norm{\xs(0)} + \int_0^t K e^{-\omega(t-s)}L\norm{\xs(s) - \xs(\rho)}\norm{\xs(s)}\inva s \notag \\
		&\leq Ke^{-\omega t}\norm{\xs(0)} + KL \int_0^t e^{-\omega(t-s)}M\abs{s - \rho}\norm{\xs(s)}\inva s \label{eq:solestimate}
	\end{align}
	and arrive at inequality \eqref{eq:priminineq}. 
\end{proof}
The resulting inequality \eqref{eq:priminineq} can be parametrized through a function $r\colon \mathbb R^{}_{\geq 0}\to \mathbb R^{}_{\geq 0}$ and a scaling of the solution $\xs$ and the time $t$ to give:

\begin{lem}[Lem. 5.3, \cite{HilI11}]\label{lem:scalethexi}
	Suppose that $A\in\sklmox$ and consider a bounded piecewise continuous function $r\colon \mathbb R^{}_{\geq 0}\to \mathbb R^{}_{\geq 0}$. Then for any solution $\xs$ of \eqref{eq:extlinsysplain} with $0 \neq \xs(0) \in X$, the function
	\begin{equation}
		\zeta \colon \mathbb R^{}_{\geq 0}\to \mathbb R^{}_{\geq 0}, \quad \zeta(t):=e^{\omega t / \alpha}\frac{\norm{\xs(t/\alpha)}}{K\norm{\xs(0)}}, \quad \text{where }\alpha:=\sqrt{KLM},
		\label{eq:scaledsolandt}
	\end{equation}
	satisfies
	\begin{equation} \label{eq:scaledintineq}
		\zeta(t)\leq 1 + \int_0^t\abs{s-r(t)}\zeta(s)\inva s,
	\end{equation}
	for all $t\geq 0$.
\end{lem}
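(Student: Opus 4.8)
The plan is to start from inequality \eqref{eq:priminineq} of Lemma \ref{lem:solestimate} and perform the change of variables that defines $\zeta$ in \eqref{eq:scaledsolandt}. First I would fix a solution $\xs$ with $0\neq\xs(0)\in X$ and an arbitrary $\rho\geq 0$; by Lemma \ref{lem:solestimate} we have
\begin{equation*}
	\norm{\xs(t)} \leq Ke^{-\omega t}\norm{\xs(0)} + KLM\int_0^t \abs{s-\rho}e^{-\omega(t-s)}\norm{\xs(s)}\inva s.
\end{equation*}
I would then divide both sides by $K\norm{\xs(0)}$ and multiply by $e^{\omega t}$, so that the left-hand side becomes $e^{\omega t}\norm{\xs(t)}/(K\norm{\xs(0)})$ and the homogeneous term collapses to $1$. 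Under the integral, the factor $e^{\omega t}e^{-\omega(t-s)}=e^{\omega s}$ pairs with $\norm{\xs(s)}$ to reconstruct the same quotient at time $s$, leaving the kernel $KLM\,\abs{s-\rho}$. This yields
\begin{equation*}
	e^{\omega t}\frac{\norm{\xs(t)}}{K\norm{\xs(0)}} \leq 1 + KLM\int_0^t \abs{s-\rho}\,e^{\omega s}\frac{\norm{\xs(s)}}{K\norm{\xs(0)}}\inva s.
\end{equation*}

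Next I would rescale time by $\alpha=\sqrt{KLM}$, i.e. replace $t$ by $t/\alpha$ and $s$ by $s/\alpha$ in the integral via the substitution $s\mapsto s/\alpha$ (so $\inva s$ picks up a factor $1/\alpha$). The integrand factor $\abs{s/\alpha-\rho}$ can be written as $\alpha^{-1}\abs{s-\alpha\rho}$, and combined with $KLM/\alpha = \alpha^{2}/\alpha=\alpha$ and the extra $1/\alpha$ from $\inva s$, the net constant in front of the integral becomes exactly $1$. Writing $r(t):=\alpha\rho$ — here $\rho$ is a free parameter, so this is where the arbitrary function $r$ enters: for each $t$ one is free to pick a different $\rho\geq 0$, hence a different value $r(t)\geq 0$ — and recognizing that $e^{\omega s/\alpha}\norm{\xs(s/\alpha)}/(K\norm{\xs(0)})=\zeta(s)$, the inequality becomes precisely \eqref{eq:scaledintineq}:
\begin{equation*}
	\zeta(t)\leq 1 + \int_0^t \abs{s-r(t)}\zeta(s)\inva s.
\end{equation*}

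The only genuinely delicate point is the bookkeeping of how the constant $KLM$ is absorbed by the choice $\alpha=\sqrt{KLM}$ together with the Jacobian of the time substitution; everything else is substitution and cancellation. I would also remark that boundedness and piecewise continuity of $r$ are inherited from the fact that $r(t)=\alpha\rho(t)$ with $\rho$ chosen as one pleases — in the intended application $\rho$ will be taken as a function of $t$ — and that the division by $K\norm{\xs(0)}$ is legitimate precisely because we assumed $\xs(0)\neq 0$. No regularity beyond the Lipshitz continuity already guaranteed by Lemma \ref{lem:sollipcont} is needed, so the argument closes without further hypotheses.
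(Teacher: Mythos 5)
Your proof is correct and is exactly the rescaling argument the paper has in mind (the paper itself only writes ``see \cite{HilI11} and replace $L$ by $LM$''): dividing \eqref{eq:priminineq} by $K\norm{\xs(0)}$, multiplying by $e^{\omega t}$, and substituting $t\mapsto t/\alpha$, $s\mapsto s/\alpha$ with $\alpha^2=KLM$ makes the constant in front of the integral collapse to $1$, and the bookkeeping you give is right. The only cosmetic remark is that the hypothesis that $r$ be bounded and piecewise continuous is not actually used in this derivation (the estimate holds pointwise for any choice of $r(t)=\alpha\rho\geq 0$); it is carried along for the subsequent comparison lemma.
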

\begin{proof}
	See the proof in \cite{HilI11} and replace $L$ by $LM$.
\end{proof}
Next, one can prove an integral comparison lemma:
\begin{lem}[Lem. 5.5, \cite{HilI11}]\label{lem:intsupsol}
For $A\in\sklmox$, for any $r$, $v\colon \mathbb R_{\geq 0} \to \mathbb R_{\geq 0}$ that are bounded and piecewise continuous and that satisfy
\begin{equation}\label{eq:intsupsol}
	v(t)\geq 1 + \int_0^t\abs{s-r(t)}\zeta(s)\inva s, 
\end{equation}
for some $t_0>0$ and for $t\in[0, t_0]$, the function $\zeta$ defined in \eqref{eq:scaledsolandt} satisfies
\begin{equation*}
	\zeta(t) \leq v(t),
\end{equation*}
for all $t\in[0, t_0]$.
\end{lem}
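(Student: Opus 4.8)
The plan is to treat \eqref{eq:intsupsol} as the assertion that $v$ is a supersolution of the same integral inequality for which, by Lemma \ref{lem:scalethexi}, $\zeta$ is a subsolution, and then to invoke a Grönwall-type comparison. Since $A\in\sklmox$, Lemma \ref{lem:scalethexi} applies with the given $r$ and yields $\zeta(t)\leq 1+\int_0^t\abs{s-r(t)}\zeta(s)\inva s$ for all $t\geq0$, in particular on $[0,t_0]$. If \eqref{eq:intsupsol} is read verbatim, i.e. with $\zeta$ under the integral, chaining it with this bound for $\zeta$ gives $\zeta(t)\leq v(t)$ at once; in the form in which such a comparison lemma is genuinely used here and in \cite[Lem.~5.5]{HilI11}, namely with $v$ in place of $\zeta$ under the integral, I would instead subtract the two inequalities. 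Setting $w:=\zeta-v$ this produces the reverse integral inequality
\begin{equation*}
	w(t)\leq \int_0^t\abs{s-r(t)}\,w(s)\inva s,\qquad t\in[0,t_0],
\end{equation*}
so it remains to show $w\leq0$ on $[0,t_0]$.

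For the comparison step I would first record the initial value: by \eqref{eq:scaledsolandt}, $\zeta(0)=1/K$, and evaluating \eqref{eq:unigrowrateA} at $s=0$ gives $\norm{I}=1\leq K$, hence $\zeta(0)\leq1$; since \eqref{eq:intsupsol} at $t=0$ forces $v(0)\geq1$, we get $w(0)=\zeta(0)-v(0)\leq0$. Because $r$ is bounded, $C:=\sup\{\abs{s-r(t)}:s,t\in[0,t_0]\}<\infty$, and because the kernel $\abs{s-r(t)}$ is nonnegative the reverse inequality gives $w(t)\leq C\int_0^t w_+(s)\inva s$ with $w_+:=\max\{w,0\}$. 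The function $w_+$ is bounded and piecewise continuous (note that $\zeta$ is continuous, since the underlying solution $\xs$ is Lipschitz continuous by Lemma \ref{lem:sollipcont}), satisfies $w_+(t)\leq C\int_0^t w_+(s)\inva s$ and $w_+(0)=0$, so the classical integral Grönwall lemma forces $w_+\equiv0$ on $[0,t_0]$, i.e. $\zeta\leq v$ there. This is the argument of \cite[Lem.~5.5]{HilI11} with $L$ replaced by $LM$, consistent with the remark in the proof of Lemma \ref{lem:scalethexi}.

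I do not expect a genuine analytic obstacle: the lemma is a Grönwall-type comparison and the estimates above are elementary. The only points that require care are organizational — using the \emph{same} $r$ in Lemma \ref{lem:scalethexi} and in \eqref{eq:intsupsol}, and checking that piecewise continuity of $v$ suffices (it does, because $v$ enters only through an integral inequality, so no regularity beyond integrability is needed in the Grönwall step). The substantive work is deferred to the next step, where one constructs an explicit supersolution $v$ and optimizes over $r$ to obtain the decay rate $\sqrt{KLM\log2}-\omega$ of Theorem \ref{thm:expstabsolsplain}; Lemma \ref{lem:intsupsol} is precisely the bridge that legitimizes bounding $\zeta$ by any such $v$.
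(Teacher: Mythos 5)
Your proof is correct. The paper itself gives no argument for this lemma (it is stated with a bare citation to [HilI11, Lem.~5.5]), so there is nothing to compare against except the standard route, which is exactly what you take: combine the subsolution inequality from Lemma \ref{lem:scalethexi} with the supersolution inequality for $v$, pass to $w_+=\max\{\zeta-v,0\}$, and kill it with the homogeneous Gr\"onwall iteration $w_+(t)\leq M(Ct)^n/n!$. You also correctly flag that the inequality \eqref{eq:intsupsol} as printed has $\zeta$ rather than $v$ under the integral — under that verbatim reading the conclusion is immediate from Lemma \ref{lem:scalethexi}, and the intended (nontrivial) statement is the one with $v$ in the integrand, which is the version your comparison argument handles. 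The only cosmetic remark is that the initial-value check $w(0)\leq 0$ is not actually needed, since the Gr\"onwall step with zero constant term already forces $w_+(0)=0$.
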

We can now prove Theorem \ref{thm:expstabsolsplain}:
		\begin{proof}
			If $A\in\sklmox$, then for any solution $\xs$ of \eqref{eq:extlinsysplain}, it holds that 
			$$e^{\omega t / \sqrt{KLM}}\frac{\norm{\xs(t/\sqrt{KLM})}}{K\norm{\xs(0)}}=:\zeta(t),$$
			with $\zeta$ satisfies \eqref{eq:scaledintineq}, cf. Lemma \ref{lem:scalethexi}. Let
\begin{equation*}
	v_2(t):=e^{t\cdot \sqrt{\log 2}}, \quad r_2(t) := \max\{0, t-\sqrt{\log 2}\}.
\end{equation*}
Then, by \cite[Lem. 5.8]{HilI11}, the functions $v_2$ and $r_2$ satisfy \eqref{eq:intsupsol} for all $t\geq 0$, such that, by Lemma \ref{lem:intsupsol}, the function $v_2$ is a supersolution, i.e. $\zeta(t)\leq v_2(t)$ at any time $t\geq 0$. Accordingly 
			$$e^{\omega t / \sqrt{KLM}}\frac{\norm{\xs(t/\sqrt{KLM})}}{K\norm{\xs(0)}}\leq e^{t\cdot \sqrt{\log 2}}$$
or, having undone the scalings,
$$\norm{\xs(t)} \leq K e^{t\cdot (\sqrt{KLM \log 2}-\omega)}\norm{\xi(0)},$$
		for all $t\geq 0$. 
		\end{proof}
\begin{rem}
	For $K<2$, the factor $\log 2$ in \eqref{eq:uniexpstab} can be replaced by $\log K$, see \cite[Thm. 2.1]{HilI11}. 
\end{rem}

\section{Local Conditions for Exponential Stability}
\label{sec:locextlinstab}
Relation \eqref{eq:condunistab} illustrates the nature of the stability results. For the parametrization $\tilde A(t) := A(\xs(t))$, the constant $LM$ is the Lipshitz constant of $t\mapsto \tilde A (t)$. Accordingly, the requirement that $LM$ must not exceed some value defined by the decay rate $\omglob$ and the bound $\mglob$
means that the changes in $\tilde A$, that may trigger new transient phases faster than the overall decay fades them out, should be limited. 

In the linear time varying case, if one considers global constants $\omglob$ and $\mglob$, one also needs a global bound on the $LM$, since the overall decay of the solution can be violated by a sudden change in $\tilde A$ at any time. Also, in the linear time varying case, the function $t\to \tilde A(t)$ is known for all time so that a global bound can be found. Improvements of the results may be obtained by relating $\mglob$, $\omglob$, and $LM$ locally in time. However, due to the arbitrariness of the mapping $t\mapsto \tilde A(t)$, such localizations would be very problem dependent.

Things are different for the extended linearizations. The mapping $t \mapsto A(\xs(t))$ is less arbitrary, since $A(\xs(t))$ will be stabilized together with the solution $\xs$. If the function $x \mapsto A(x)$ is smooth, then, for $\xs(t)\to 0$, the coefficient $A(\xs(t))$ approaches a constant value. In fact, when having reached or when starting from a state close to zero, exponential decay can be established by the results on \emph{local exponential stability} \cite{BreK14} or on \emph{almost linear systems}, cf. the proof for the SDRE stabilization properties in \cite{BanLT07}. On the other hand, for an arbitrary starting value, a global bound on $M(x) = \norm{A(x)x}$ might not be available or too conservative. Thus, the results provided only apply to particular classes of problems for which the existence of the system invariant subspace $X$ is known or to particular given trajectories.

The following results address sufficient conditions for exponential decay of solution trajectories at discrete time instances that can be locally estimated by means of bounds on the growth of the solution in a certain time interval.
This decay at discrete instances will eventually drive the system into a state close to zero from where the linear theory will provide exponential decay. The piecewise in time character of the results that follow can also be used to define feedback laws that act locally.

We drop the global assumption on the existence of a system invariant subspace $X\subset \mathbb R^{\Nx}$, cf. Assumption \ref{ass:contiHomoBounded}(2), and consider a set of initial values and a set that contains all states that evolve from these initial values within a finite time horizon.

\begin{defin}\label{def:X0XT}
	Let $X_0 \subset \mathbb R^{\Nx}$ be a connected closed set that contains the origin and let $T\geq 0$. 
	\begin{itemize}
		\item[a.)] By $\XiOT$ we denote the set of all solution trajectories that start in $X_0$:
			\begin{equation*}
				\XiOT : = \{\xs\colon [0,T] \to \mathbb R^{\Nx} : \xs \text{ solves \eqref{eq:extlinsysplain} and }\xs(0)\in X_0 \}.
			\end{equation*}
		\item[b.)] By $X_T$ we denote the set that contains all final values of the trajectories 
			\begin{equation*}
				X_T := \{\xs(T) : \xs \in \XiOT\}.
			\end{equation*}
		\item[c.)] By $\XOT$ we denote the set that contains all values that are achieved by the solution trajectories within the time interval $[0, T]$:
			\begin{equation*}
				\XOT := \{\xs(t) : \xs \in \XiOT, ~0\leq t \leq T\}.
			\end{equation*}
	\end{itemize}
 If any solution to \eqref{eq:extlinsysplain} that starts in $X_0$ has a finite escape time $t_f<T$, we set $X_T:=\XOT:=\mathbb R^{\Nx}$. 
\end{defin}


The definition of $\sklmox$, cf. Definition \ref{def:classsklmox}, readily extends to $\sklmoxT$, if one assumes that for an element $A\colon X \subset \mathbb R^{n}\to \mathbb R^{n,n}$ and $T>0$, there exist constants $K$, $L$, $M_T$, $\omega$ such that Assumption \ref{ass:contiHomoBounded} and Assumption \ref{ass:unilipunidecA} hold and such that $M_T:=\sup_{x\in \XOT}\norm{A(x)x} < \infty$ is valid on the set $\XOT$. Note that in the case of solutions of finite escape time less than $T$, the set $\XOT$ is not bounded and the latter assumption $M_T<\infty$ does not hold, cf. Definition \ref{def:X0XT}.

\begin{rem}
	We will assume that the pointwise stability constants $\omglob$ and $\mglob$ and the Lipshitz constant $L$ are independent of the state. The uniformity of the stability constants will be used to state global convergence and is going to be a design target of a feedback stabilization. The uniformity of the Lipshitz constant is given for the case that $A$ is affine linear in $x$. Also, a state dependent $L$ can be treated with the same approach illustrated below.
\end{rem}

In the following theorem, we provide a local condition for exponential decay at discrete time instances of trajectories that start in $X_0$. The basic reasoning is that if for a time $t^*$ all trajectories are in a set that is contained in the considered set of initial values $X_0$ then, because of the autonomy of the system, the system states will be contained in $X_{[0,t^*]}$ thereafter. Accordingly, if one can establish exponential decay for the short time horizon, then the decay will hold on for the whole time axis. Having stated the basic result, we refine it by providing a dynamic bound which can replace the static constant $LM_T$, which is sharper, and which can be evaluated numerically.

\begin{thm}\label{thm:locconexpdec}
	For a given $T>0$, let $A \in \sklmoxT$ and for $0 \leq t \leq T$, let $M_t := \sup_{x\in X_t}\norm{A(x)x}$. If for a \tstar, with $0< \tstar \leq T$, \begin{equation*} 
		-\omega_{\tstar} :=  \sqrt{KLM_{\tstar}\log 2}-\omglob
	\end{equation*}
	and 
	\begin{equation*}\label{eq:locexplocomega}
		-\omega^* : =\frac{\log \mglob}{\tstar} - \omega_{\tstar} 
	\end{equation*}
	are negative, then the snapshots $\xs(t)$ of any solution $\xs$ to \eqref{eq:extlinsysplain} with $\xs(0) = x_0 \in X_0$ taken on the discrete grid $\mathcal T^*:=\{t \colon t = N\cdot \tstar,~ N=0,1,\dotsc \}$ decay exponentially in the sense that
	\begin{equation*}
		\norm{\xs(t)} \leq \norm{x_0}e^{-\omega^* t}, \quad \text{for all } t\in \mathcal T^* .
	\end{equation*}
\end{thm}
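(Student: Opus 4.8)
The plan is to apply the estimate of Theorem~\ref{thm:expstabsolsplain} only on the finite window $[0,\tstar]$, use the hypothesis $-\omstar<0$ to see that every trajectory has returned into $X_0$ by time $\tstar$, and then close the argument by induction, exploiting that \eqref{eq:extlinsysplain} is autonomous. First I would establish a finite--horizon version of Theorem~\ref{thm:expstabsolsplain}: on $[0,\tstar]$ a solution $\xs$ with $\xs(0)=x_0\in X_0$ stays, by definition, in the bounded set $X_{[0,\tstar]}$, on which $\norm{A(x)x}$ is bounded by $M_{\tstar}$ and on which Assumption~\ref{ass:unilipunidecA} holds with constants $K$, $L$, $\omglob$. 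Hence Lemma~\ref{lem:solestimate}, Lemma~\ref{lem:scalethexi}, Lemma~\ref{lem:intsupsol} and \cite[Lem.~5.8]{HilI11} all remain valid with $t$, $\rho$, $r(t)$ restricted to $[0,\tstar]$ (equivalently, with the scaled variable restricted to $[0,\sqrt{KLM_{\tstar}}\,\tstar]$), so that the proof of Theorem~\ref{thm:expstabsolsplain} carries over verbatim and yields $\norm{\xs(t)}\le K e^{t(\sqrt{KLM_{\tstar}\log 2}-\omglob)}\norm{x_0}=K e^{-\omega_{\tstar}t}\norm{x_0}$ for $0\le t\le\tstar$.

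Next I would evaluate the finite--horizon estimate at $t=\tstar$. Since, by definition, $-\omstar\tstar=\log K-\omega_{\tstar}\tstar$, it follows that
\begin{equation*}
	\norm{\xs(\tstar)}\le K e^{-\omega_{\tstar}\tstar}\norm{x_0}=e^{\log K-\omega_{\tstar}\tstar}\norm{x_0}=e^{-\omstar\tstar}\norm{x_0}.
\end{equation*}
Since $-\omstar<0$, this gives $\norm{\xs(\tstar)}\le\norm{x_0}$, hence $\xs(\tstar)\in X_0$ --- here one uses that $X_0$ is a closed ball about the origin (the intended setting), or, more generally, that $X_{\tstar}\subseteq X_0$. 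By autonomy of \eqref{eq:extlinsysplain} this inclusion propagates: $X_{[0,N\tstar]}=X_{[0,\tstar]}$ for all $N$, so the constants $K$, $L$, $M_{\tstar}$, $\omglob$ of the finite--horizon estimate stay in force on the whole time axis.

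Finally I would iterate: fix $x_0\in X_0$ with solution $\xs$, and for $N\ge1$ note that $\tilde\xs(\cdot):=\xs(\cdot+(N-1)\tstar)$ solves \eqref{eq:extlinsysplain} by autonomy, starts in $X_0$ (previous paragraph applied $N-1$ times), and stays in $X_{[0,\tstar]}$ on $[0,\tstar]$; so the finite--horizon estimate applies to $\tilde\xs$ and gives $\norm{\xs(N\tstar)}\le e^{-\omstar\tstar}\norm{\xs((N-1)\tstar)}$ together with $\xs(N\tstar)\in X_0$. Iterating the multiplicative bound yields $\norm{\xs(N\tstar)}\le e^{-\omstar N\tstar}\norm{x_0}$, i.e.\ $\norm{\xs(t)}\le\norm{x_0}e^{-\omstar t}$ for every $t=N\tstar\in\mathcal T^*$, as claimed.

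The first step is routine bookkeeping once one tracks the bounded window $[0,\tstar]$ carefully --- in particular that the growth constant relevant there is really $\sup_{0\le s\le\tstar}M_s$, which is exactly the kind of dynamic bound the refinement announced after the theorem replaces $LM_T$ by. The step I expect to be the main obstacle is the re--entry $\xs(\tstar)\in X_0$: the norm contraction $\norm{\xs(\tstar)}\le\norm{x_0}$ only closes the induction if the geometry of $X_0$ is compatible with it (i.e.\ $X_{\tstar}\subseteq X_0$ --- automatic for $X_0$ a ball about the origin, but otherwise in need of justification or an extra hypothesis), since it is precisely this inclusion that makes the snapshots on $\mathcal T^*$ obey one and the same finite--horizon estimate.
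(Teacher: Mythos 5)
Your proposal is correct and follows essentially the same route as the paper's own proof: apply the finite-horizon version of Theorem~\ref{thm:expstabsolsplain} on $[0,\tstar]$, observe that $Ke^{-\omega_{\tstar}\tstar}=e^{-\omstar\tstar}<1$, and iterate using autonomy. You also rightly flag the one point the paper glosses over, namely that the norm contraction only yields $\xs(\tstar)\in X_0$ when $X_0$ contains the relevant ball about the origin (the paper simply asserts ``$\xs(\tstar)$ is in a ball $X_0^*\subset X_0$''), so your explicit caveat $X_{\tstar}\subseteq X_0$ is a fair reading of the intended setting rather than a gap in your argument.
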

\begin{proof}
	The assumptions made include that $M_T<\infty$ so that for every $x_0\in X_0$ the associated solution $\xs$ to \eqref{eq:extlinsysplain} that starts in $x_0$ exists on $[0,T]$. Noting that by definition the bound $M_t = \sup_{x\in X_t}\norm{A(x)x}$ grows with $t$ and noting that Theorem \ref{thm:expstabsolsplain} is also valid on a finite time horizon, any such solution $\xs$ fulfills
	\begin{equation*}
		\norm{\xs(t)} \leq K e^{-\omega_t t}\norm{x_0}=e^{(\frac {\log \mglob}{t} - \omega_t)t}\norm{x_0}, \quad \text{for }0<t\leq T,
	\end{equation*}
	with $\omega_t : = \sqrt{KLM_t \log 2} -\omglob$. Thus, if there exists a $\tstar$ such that $\omega_\tstar$ and $\omega^*$ as defined in \eqref{eq:locexplocomega} are negative, then at $t^*$ any such solution $\xs$ fulfills 
	\begin{equation*}
		\norm{\xs(\tstar)} \leq \norm{x_0}e^{-\omega^*\tstar},
	\end{equation*}
	with $e^{-\omega^*\tstar}<1$. Accordingly, the current value $\xs(\tstar)$ is in a ball $X_0^* \subset X_0$. Repeating the previous arguments with $X_0$ replaced by $X_0^*$ and $x_0$ by $x(\tstar)$ and noting that the new constants $K$, $L$, and $M_t$ will be smaller than the previous, we can directly state the estimate
	\begin{equation*}
		\norm{\xs(2\tstar)} \leq \norm{\xs(\tstar)}e^{-\omega^*\tstar} \leq \norm{x_0}e^{-\omega^* 2\tstar},
	\end{equation*}
	which, by induction, holds for any multiple of $t^*$.
\end{proof}

Next, we replace the static constant $LM_t$ by a dynamic estimate that bases on differential and integral mean values.

\begin{lem}\label{lem:meanvldconsts}
	 For a given $T>0$, let $A \in \sklmoxT$ be smoothly differentiable. If also the chosen norm $\norm{\cdot}$ is smoothly differentiable, then the constant $LM_t$ in Theorem \ref{thm:locconexpdec} can be replaced by 
	\begin{equation}
		m_t := \inf_{\rho \in \mathbb R^{}_{\geq 0}} \sup_{\xs \in \XiOt}  
		\frac{\int_0^t e^{-\omega(t-s)} \norm{A(\xs(s)) - A(\xs(\rho))}\norm{\xs(s)}\inva s}{\int_0^t e^{-\omega(t-s)} \abs{s-\rho}\norm{\xs(s)}\inva s}.
		\label{eq:meanvaluemeanint}
	\end{equation}
\end{lem}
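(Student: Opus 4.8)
The plan is to reopen the chain of estimates Lemma~\ref{lem:solestimate} $\to$ Lemma~\ref{lem:scalethexi} $\to$ Lemma~\ref{lem:intsupsol} $\to$ Theorem~\ref{thm:expstabsolsplain} on which the proof of Theorem~\ref{thm:locconexpdec} rests, and to delay the two crude Lipschitz bounds that collapse the integrand into the static constant $LM_t$. Concretely, in the proof of Lemma~\ref{lem:solestimate} I would stop short of the Lipschitz bounds that produce \eqref{eq:solestimate} and keep the intermediate estimate
\[
	\norm{\xs(t)} \leq K e^{-\omega t}\norm{\xs(0)} + K\int_0^t e^{-\omega(t-s)}\,\norm{A(\xs(s)) - A(\xs(\rho))}\,\norm{\xs(s)}\inva s ,
\]
which holds for every $\rho\geq 0$ and every $t\leq T$, i.e.\ before $\norm{A(\xs(s)) - A(\xs(\rho))}\leq L\norm{\xs(s)-\xs(\rho)}\leq LM_t\abs{s-\rho}$ is invoked.

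Next I would use that $m_t$ in \eqref{eq:meanvaluemeanint} is precisely an $\inf_{\rho}\sup_{\xs\in\XiOt}$ of the quotient that appears here. Since that quotient is bounded by $LM_t$ for every admissible $\rho$ and $\xs$ — because $\norm{A(\xs(s))-A(\xs(\rho))}\leq L\norm{\xs(s)-\xs(\rho)}\leq LM_t\abs{s-\rho}$ — we have $m_t\leq LM_t<\infty$; hence for every $\varepsilon>0$ there is a single $\rho_\varepsilon\geq 0$, not depending on the individual trajectory, with
\[
	\int_0^t e^{-\omega(t-s)}\norm{A(\xs(s)) - A(\xs(\rho_\varepsilon))}\norm{\xs(s)}\inva s \;\leq\; (m_t+\varepsilon)\int_0^t e^{-\omega(t-s)}\abs{s-\rho_\varepsilon}\norm{\xs(s)}\inva s
\]
for all $\xs\in\XiOt$. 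Inserting $\rho=\rho_\varepsilon$ into the intermediate estimate reproduces inequality \eqref{eq:priminineq} verbatim, except that $KLM$ is replaced by $K(m_t+\varepsilon)$ and the admissible, possibly $t$-dependent, value $\rho_\varepsilon$ takes over the role of the free function $r$ of Lemma~\ref{lem:scalethexi}. This is where the smoothness hypotheses enter: differentiability of $A$ and of $\norm{\cdot}$ is what makes $m_t$ a bona fide differential/integral mean value, legitimizing the representation of $\xs(s)-\xs(\rho)$ and of $A(\xs(s))-A(\xs(\rho))$ as integrals of $\dot\xs$ and of $\frac{d}{d\sigma}A(\xs(\sigma))$ over $[\rho,s]$ together with the differentiation of $s\mapsto\norm{\xs(s)}$; the bound $m_t\leq LM_t$ above in addition shows that the proposed constant is never worse than the one it replaces.

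From here the argument is the one already spelled out in the excerpt: rescale as in Lemma~\ref{lem:scalethexi} with $\alpha=\sqrt{K(m_t+\varepsilon)}$ in place of $\sqrt{KLM}$, keep the supersolution $v_2(t)=e^{t\sqrt{\log 2}}$, $r_2(t)=\max\{0,t-\sqrt{\log 2}\}$ and apply Lemma~\ref{lem:intsupsol} exactly as in the proof of Theorem~\ref{thm:expstabsolsplain}, undo the scalings to obtain $\norm{\xs(t)}\leq K e^{t(\sqrt{K(m_t+\varepsilon)\log 2}-\omega)}\norm{\xs(0)}$, and finally let $\varepsilon\downarrow 0$. Substituting this bound for the $LM_t$-bound in the proof of Theorem~\ref{thm:locconexpdec} then gives the assertion.

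The step I expect to be the real obstacle is a uniformity matter rather than the replacement itself. The rescaling together with the supersolution comparison needs one constant that is independent of time on the whole window $[0,t]$, while $m_t$ is defined pointwise; at an intermediate time $\tau\leq t$ the reasoning above only delivers $m_\tau+\varepsilon$, since the quotient there is built from $\int_0^\tau$ and the weight $e^{-\omega(\tau-s)}$, which is exactly the data defining $m_\tau$. The clean remedy is to run the argument with the monotone majorant $\bar m_t:=\sup_{0\leq\tau\leq t}m_\tau$ as the uniform constant; this is harmless for the conclusion because $\bar m_t\leq LM_t$ by the bound above and the monotonicity of $t\mapsto M_t$, so the replacement still strictly sharpens Theorem~\ref{thm:locconexpdec}. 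To obtain the statement with $m_t$ itself, one must moreover show that $\tau\mapsto m_\tau$ is nondecreasing — plausible, since $X_{[0,\tau]}$, and hence the set of states feeding the numerator, grows with $\tau$, but not automatic because the denominator and the weight change simultaneously — or else observe that in Theorem~\ref{thm:locconexpdec} only the endpoint value $m_{\tstar}$ is actually used. Settling this point is the one place where care is needed.
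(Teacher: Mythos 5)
Your proposal is correct and follows essentially the same route as the paper: both arguments delay the Lipschitz bound $\norm{A(\xs(s))-A(\xs(\rho))}\le LM_t\abs{s-\rho}$ inside Lemma \ref{lem:solestimate} and replace it by the quotient-based constant $m_t$ before rerunning the scaling and supersolution comparison (the paper extracts $m_t$ via the mean-value theorem plus an integral weighted-average identity, you via a direct $\inf$--$\sup$ and an $\varepsilon$-argument, which is a cosmetic difference). The uniformity-in-time issue you flag at the end is genuine and is silently glossed over in the paper's proof; your remedies --- working with $\sup_{0\le\tau\le t}m_\tau$, or observing that only the endpoint value $m_{\tstar}$ actually enters Theorem \ref{thm:locconexpdec} --- are sound.
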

\begin{proof}
	Under the given assumptions, for $\xi \in \XiOT$ and $\rho \in \mathbb R_{\geq 0}$, the function $f_\rho \colon (0,T) \to \norm{A(\xi(s)) - A(\xi(\rho))}$ is differentiable so that, by the \emph{Mean-Value Theorem}, there exists an $s_m \in (\min\{s, \rho\}, \max\{s, \rho\})$ such that
	\begin{equation} \label{eq:fminvalue}
		f_\rho (s) - f_\rho(\rho) = \norm{A(\xi(s)) - A(\xi(\rho))} = \dot f_\rho(s_m)\abs{s - \rho}.
	\end{equation}
	Accordingly, we can rewrite the estimate $\eqref{eq:solestimate}$ in Lemma \ref{lem:solestimate} as 
	\begin{equation*}
		\xi(t) \leq Ke^{-\omega t}\norm{\xs(0)} +  K\int_0^t e^{-\omega(t-s)}\dot f_\rho(s_m)\abs{s - \rho}\norm{\xs(s)}\inva s .
	\end{equation*}
	Since the function $s \mapsto e^{-\omega(t-s)}\abs{s - \rho}\norm{\xs(s)}$ is continuous and positive there exists a constant $\tilde m$ such that 
	\begin{equation} \label{eq:gweightedaverage}
		\int_0^t e^{-\omega(t-s)}\dot f_\rho(s_m)\abs{s - \rho}\norm{\xs(s)}\inva s  
		= \tilde m \int_0^t e^{-\omega(t-s)}\abs{s - \rho}\norm{\xs(s)}\inva s.
	\end{equation}
	If $\xi(s)=0$ for all $s$, we set $\tilde m = 0$. For all other cases, we substitute  
	\begin{equation*}
		\dot f_\rho(s_m) =	\frac{\norm{(A(\xi(s)) - A(\xi(\rho))}}{\abs{{f_\rho (s) - f_\rho(\rho)}}},
	\end{equation*}
	cf. \eqref{eq:fminvalue}, which by the differentiability of $f_\rho$ is well defined also for $s=\rho$, to compute
	\begin{equation*}
 \tilde m =
 \frac{\int_0^t e^{-\omega(t-s)}\norm{(A(\xi(s)) - A(\xi(\rho))}\norm{\xs(s)}\inva s  }{
 \int_0^t e^{-\omega(t-s)}\abs{s - \rho}\norm{\xs(s)}\inva s},
	\end{equation*}
	by virtue of \eqref{eq:gweightedaverage}. Finally, the desired estimate \eqref{eq:meanvaluemeanint} holds true, if one takes the worst case estimate with respect to the possible trajectories $\xs \in \XiOT$ for a given $\rho \in (0,T)$ that possibly has been optimized in order to make the estimate as small as possible.
\end{proof}


We illustrate the use and computability of the condition formulated in Theorem \ref{thm:locconexpdec} with the improved bounds introduced in Lemma \ref{lem:meanvldconsts} by means of an example.

\begin{exa}~
	Consider the following parametrized SDC system
	\begin{equation}\label{eq:exa_sdc_osci}
		\dsvec = 
		\begin{bmatrix}
			-1 & -(1 + \sveco^2) \\
			1 + \sveco^2 & \alpha
		\end{bmatrix}\svec, \quad \xi(0)=x_0 \in X_0,
	\end{equation}
	with a system matrix $A(x)$ that for any $x=
	\begin{bmatrix} \xi_1 & \xi_2 \end{bmatrix} ^\trp\in \mathbb R^{2}$ and for $\alpha \in [-1, 1]$ has the two eigenvalues $\lambda_1$, $\lambda_2$ with real part $\Re(\lambda_1)=\Re(\lambda_2)=\frac 12 (-1+\alpha)$. Moreover, since $\Im \lambda_1 \neq \Im \lambda_2$, the matrix is diagonalizable so that the constant $K$ in \eqref{eq:unigrowrateA} can be computed as the condition number of the eigenvector matrix. Finally, given the set of initial values $X_0$, one can estimate $m_t$, cf. \eqref{eq:meanvaluemeanint}, through examining the solution trajectories to \eqref{eq:exa_sdc_osci} that start on a discrete grid in $X_0$. Thus, one can numerically check the existence of a $\tstar$, such, that for given $\alpha$ and $X_0$ it holds that
	\begin{equation}
		-\omega^* : =\frac{\log \mglob}{\tstar} + \sqrt{Km_\tstar\log 2}- \omglob \label{eq:omstar_oneformula} 
	\end{equation}
	is negative, which is a sufficient condition for the stability of the considered system in the considered range of initial values.
	
	For the presented example on how the above estimates can detect stability, we set $\alpha=0.4$, which results in $\omglob=0.3$, and we set $X_0\subset \mathbb R^{2} $ to be the closed ball around the origin of radius $r=0.25$. The grid for $X_0$ uses $12$ equally distributed points on the circle with radius $r=0.25$, another $8$ points on the circle with $r=0.17$, and $4$ points at $r=0.08$.
 
	From the computed trajectories we compute $K(t)$ (Fig. \ref{fig:stabtrejecs}(a)), $m_t$ (Fig. \ref{fig:stabtrejecs}(b)) with the manually optimized $\rho:=0.55t$, and, defining $K:=\max_{0\leq t\leq \tstar}K(t)$ taken over all trajectories, evaluate $-\omstar$ as in \eqref{eq:omstar_oneformula} (Fig. \ref{fig:stabtrejecs}(c)). Since for $\tstar \approx 6.0$, the value of $-\omstar$ becomes negative, the sufficient conditions for stability as described in Theorem \ref{thm:locconexpdec} and Lemma \ref{lem:meanvldconsts} are fulfilled. Obviously, the computed trajectories approach zero as $t\to\infty$ (Fig. \ref{fig:stabtrejecs}(d)).

\begin{figure}[tbp]
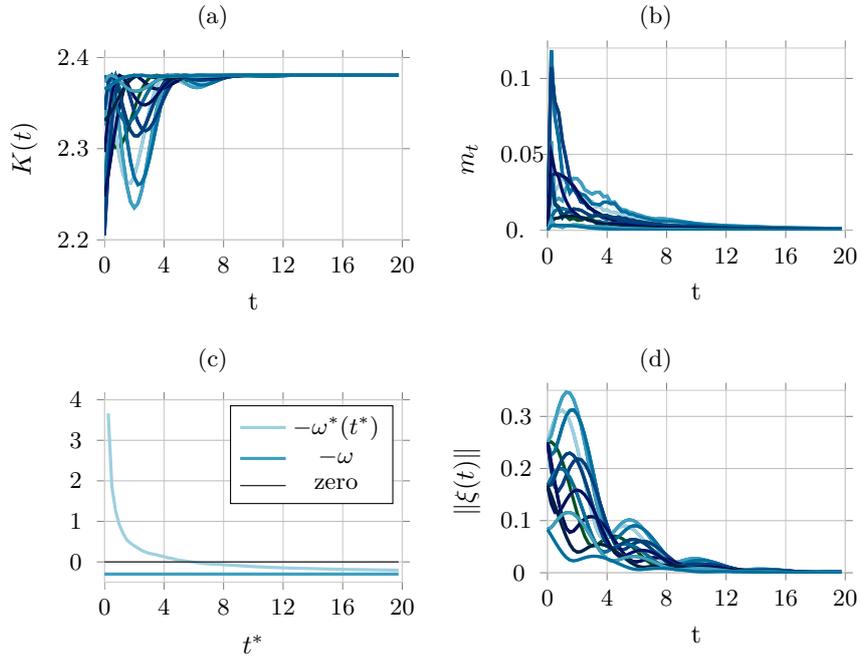

  \pgfplotsset{footnotesize}
	\pgfplotsset{yticklabel style={text width=1em ,align=right}}
	\setlength\figureheight{4cm}
	\setlength\figurewidth{5.5cm}
	\subfiguretopcaptrue
	\begin{center}
		\subfigure[]{\input{pics/ks-alpha0.4-rho0.55-xzrad0.25.tikz}}
		\subfigure[]{\input{pics/mtlls-alpha0.4-rho0.55-xzrad0.25.tikz}}
		\subfigure[]{
%
%
%
\begin{tikzpicture}

\definecolor{color1}{rgb}{0.247058823529412,0.623529411764706,0.749019607843137}
\definecolor{color0}{rgb}{0.623529411764706,0.811764705882353,0.874509803921569}
\definecolor{color2}{rgb}{0,0.435294117647059,0.623529411764706}

\begin{axis}[
xlabel={$t^*$},
ylabel={$\phantom{K(t)}$},
xtick={0, 4, 8, 12, 16, 20},
xmin=0, xmax=20,
ymin=-0.5, ymax=4,
width=\figurewidth,
height=\figureheight,
tick align=outside,
xmajorgrids,
x grid style={white!80.0!black},
ymajorgrids,
y grid style={white!80.0!black},
axis line style={white!80.0!black},
legend style={at={(0.97,0.97)}, anchor=north east},
legend entries={{$-\omega^*(t^*)$},{$-\omega$},{zero}}
]
\addplot [very thick, color0]
table {%
0 inf
0.25 3.66412663855349
0.5 1.85231235090773
0.75 1.25658320005731
1 0.916116840137061
1.25 0.705143865671132
1.5 0.543861572973755
1.75 0.46200157877735
2 0.39860778865423
2.25 0.340017418691614
2.5 0.282143776881523
2.75 0.243999634985049
3 0.211121807172074
3.25 0.194603823803479
3.5 0.175709031330026
3.75 0.145712055489018
4 0.132840862792375
4.25 0.0991634830938613
4.5 0.0894525820618807
4.75 0.0616972571899039
5 0.0409204084563544
5.25 0.0317574253494483
5.5 0.015304052734875
5.75 0.00509308130308694
6 -0.00769792661862065
6.25 -0.0181851243593969
6.5 -0.0302324900701061
6.75 -0.0398634104032499
7 -0.0473823816396726
7.25 -0.0555706394639268
7.5 -0.0574292022599925
7.75 -0.0616629226536164
8 -0.0674019294410755
8.25 -0.0738289414502147
8.5 -0.0787127704757529
8.75 -0.0864178686601202
9 -0.0911762168434311
9.25 -0.0987471271240633
9.5 -0.103596406870065
9.75 -0.110089331061026
10 -0.11544009948536
10.25 -0.1198259562067
10.5 -0.124279105591642
10.75 -0.128234886454965
11 -0.132254038790666
11.25 -0.135689252699656
11.5 -0.139506256884845
11.75 -0.142400618397245
12 -0.145856056303871
12.25 -0.149098475728445
12.5 -0.151273010325924
12.75 -0.153991690753089
13 -0.155968738393957
13.25 -0.158482498958926
13.5 -0.16057275302165
13.75 -0.163178500727615
14 -0.16545922834748
14.25 -0.1681119803874
14.5 -0.170479004045165
14.75 -0.173042915072193
15 -0.175334336100907
15.25 -0.1775343051056
15.5 -0.17961593252012
15.75 -0.181575563892406
16 -0.183470435829019
16.25 -0.185129325714527
16.5 -0.186845481120328
16.75 -0.188208915323606
17 -0.189671364889802
17.25 -0.190852012383574
17.5 -0.192230311449852
17.75 -0.193636189856585
18 -0.19489538802997
18.25 -0.19639746779786
18.5 -0.197768745981743
18.75 -0.199335772687987
19 -0.200747017161743
19.25 -0.202304613397168
19.5 -0.203668985971994
19.75 -0.205135158952899
};
\addplot [very thick, color1]
table {%
0 -0.3
0.25 -0.3
0.5 -0.3
0.75 -0.3
1 -0.3
1.25 -0.3
1.5 -0.3
1.75 -0.3
2 -0.3
2.25 -0.3
2.5 -0.3
2.75 -0.3
3 -0.3
3.25 -0.3
3.5 -0.3
3.75 -0.3
4 -0.3
4.25 -0.3
4.5 -0.3
4.75 -0.3
5 -0.3
5.25 -0.3
5.5 -0.3
5.75 -0.3
6 -0.3
6.25 -0.3
6.5 -0.3
6.75 -0.3
7 -0.3
7.25 -0.3
7.5 -0.3
7.75 -0.3
8 -0.3
8.25 -0.3
8.5 -0.3
8.75 -0.3
9 -0.3
9.25 -0.3
9.5 -0.3
9.75 -0.3
10 -0.3
10.25 -0.3
10.5 -0.3
10.75 -0.3
11 -0.3
11.25 -0.3
11.5 -0.3
11.75 -0.3
12 -0.3
12.25 -0.3
12.5 -0.3
12.75 -0.3
13 -0.3
13.25 -0.3
13.5 -0.3
13.75 -0.3
14 -0.3
14.25 -0.3
14.5 -0.3
14.75 -0.3
15 -0.3
15.25 -0.3
15.5 -0.3
15.75 -0.3
16 -0.3
16.25 -0.3
16.5 -0.3
16.75 -0.3
17 -0.3
17.25 -0.3
17.5 -0.3
17.75 -0.3
18 -0.3
18.25 -0.3
18.5 -0.3
18.75 -0.3
19 -0.3
19.25 -0.3
19.5 -0.3
19.75 -0.3
};
\addplot [black]
table {%
0 0
0.25 0
0.5 0
0.75 0
1 0
1.25 0
1.5 0
1.75 0
2 0
2.25 0
2.5 0
2.75 0
3 0
3.25 0
3.5 0
3.75 0
4 0
4.25 0
4.5 0
4.75 0
5 0
5.25 0
5.5 0
5.75 0
6 0
6.25 0
6.5 0
6.75 0
7 0
7.25 0
7.5 0
7.75 0
8 0
8.25 0
8.5 0
8.75 0
9 0
9.25 0
9.5 0
9.75 0
10 0
10.25 0
10.5 0
10.75 0
11 0
11.25 0
11.5 0
11.75 0
12 0
12.25 0
12.5 0
12.75 0
13 0
13.25 0
13.5 0
13.75 0
14 0
14.25 0
14.5 0
14.75 0
15 0
15.25 0
15.5 0
15.75 0
16 0
16.25 0
16.5 0
16.75 0
17 0
17.25 0
17.5 0
17.75 0
18 0
18.25 0
18.5 0
18.75 0
19 0
19.25 0
19.5 0
19.75 0
};
\end{axis}

\end{tikzpicture}}
		\subfigure[]{\input{pics/trjnrms-alpha0.4-rho0.55-xzrad0.25.tikz}}
	\end{center}
	\caption{Computed bounds $K$ for the transient behavior (a), the estimate $m_t$ (b) and the resulting decay rates $\omega^*$ (c), and the norm of the trajectories over time and for various initial data in $X_0$ (d).}
	\label{fig:stabtrejecs}
\end{figure}

Note that for a larger $X_0$, some trajectories are not stable and also $-\omstar$ does not become negative, see Fig. \ref{fig:notstabtrajecs}.

\begin{figure}[tbp]
  \pgfplotsset{footnotesize}
	\setlength\figureheight{5cm}
	\setlength\figurewidth{5.5cm}
	\begin{center}
%
%
%
\begin{tikzpicture}

\definecolor{color1}{rgb}{0.247058823529412,0.623529411764706,0.749019607843137}
\definecolor{color0}{rgb}{0.623529411764706,0.811764705882353,0.874509803921569}
\definecolor{color2}{rgb}{0,0.435294117647059,0.623529411764706}

\begin{axis}[
xlabel={$t^*$},
xtick={0, 4, 8, 12, 16, 20},
ylabel={None},
xmin=0, xmax=20,
ymin=-1, ymax=9,
width=\figurewidth,
height=\figureheight,
tick align=outside,
xmajorgrids,
x grid style={white!80.0!black},
ymajorgrids,
y grid style={white!80.0!black},
axis line style={white!80.0!black},
legend style={at={(0.97,0.97)}, anchor=north east},
legend entries={{$-\omega^*(t^*)$},{$-\omega$},{zero}}
]
\addplot [very thick, color0]
table {%
0 inf
0.25 8.14876748597566
0.5 4.9014917783816
0.75 4.19642980504987
1 4.76533560959222
1.25 4.617021748501
1.5 4.66326430860978
1.75 4.48611745835033
2 3.88749633092808
2.25 2.94231660089252
2.5 2.85896986779729
2.75 2.34728579279374
3 2.27284231480366
3.25 1.9028554259871
3.5 1.77556751178588
3.75 2.54989485645652
4 2.44859775576181
4.25 2.26452128769242
4.5 2.20931009942094
4.75 1.51697483508731
5 1.74313407263261
5.25 1.71193700374242
5.5 1.37248971740569
5.75 1.3068799348453
6 1.79293095640463
6.25 1.65972645360381
6.5 1.59571272515389
6.75 1.52267473492922
7 1.56601660670552
7.25 1.52885270759432
7.5 1.1332640126776
7.75 1.2732548655622
8 1.23187506625459
8.25 1.31933258452377
8.5 1.24591259456215
8.75 1.09591277406781
9 1.05026139999017
9.25 1.36175530963344
9.5 1.31727976501131
9.75 0.910013536064911
10 0.879331106960951
10.25 0.833121917260301
10.5 1.07356474911384
10.75 1.04317523323984
11 1.0415725024497
11.25 1.01952760159181
11.5 0.810217630680474
11.75 0.742694638930909
12 1.17454356030052
12.25 0.888912308139466
12.5 0.87518461177475
12.75 0.948431445774409
13 0.924995973090703
13.25 0.926036322206632
13.5 0.917141243367143
13.75 0.73244476197865
14 0.672833075253658
14.25 0.751652451221986
14.5 0.770247877198308
14.75 0.994572985514296
15 0.957536037818044
15.25 0.932603613754684
15.5 0.827303478319219
15.75 0.797988178214079
16 0.651981262135684
16.25 0.715457922256564
16.5 0.797902253229285
16.75 0.733480450463841
17 0.679969199801724
17.25 0.660108056185824
17.5 0.969459129357811
17.75 0.646986632782335
18 0.60211393663106
18.25 0.625542122700861
18.5 0.591092657754973
18.75 0.589406731113264
19 0.546561274274513
19.25 0.656627188555858
19.5 0.750367363667089
19.75 1.1090257720936
};
\addplot [very thick, color1]
table {%
0 -0.3
0.25 -0.3
0.5 -0.3
0.75 -0.3
1 -0.3
1.25 -0.3
1.5 -0.3
1.75 -0.3
2 -0.3
2.25 -0.3
2.5 -0.3
2.75 -0.3
3 -0.3
3.25 -0.3
3.5 -0.3
3.75 -0.3
4 -0.3
4.25 -0.3
4.5 -0.3
4.75 -0.3
5 -0.3
5.25 -0.3
5.5 -0.3
5.75 -0.3
6 -0.3
6.25 -0.3
6.5 -0.3
6.75 -0.3
7 -0.3
7.25 -0.3
7.5 -0.3
7.75 -0.3
8 -0.3
8.25 -0.3
8.5 -0.3
8.75 -0.3
9 -0.3
9.25 -0.3
9.5 -0.3
9.75 -0.3
10 -0.3
10.25 -0.3
10.5 -0.3
10.75 -0.3
11 -0.3
11.25 -0.3
11.5 -0.3
11.75 -0.3
12 -0.3
12.25 -0.3
12.5 -0.3
12.75 -0.3
13 -0.3
13.25 -0.3
13.5 -0.3
13.75 -0.3
14 -0.3
14.25 -0.3
14.5 -0.3
14.75 -0.3
15 -0.3
15.25 -0.3
15.5 -0.3
15.75 -0.3
16 -0.3
16.25 -0.3
16.5 -0.3
16.75 -0.3
17 -0.3
17.25 -0.3
17.5 -0.3
17.75 -0.3
18 -0.3
18.25 -0.3
18.5 -0.3
18.75 -0.3
19 -0.3
19.25 -0.3
19.5 -0.3
19.75 -0.3
};
\addplot [black]
table {%
0 0
0.25 0
0.5 0
0.75 0
1 0
1.25 0
1.5 0
1.75 0
2 0
2.25 0
2.5 0
2.75 0
3 0
3.25 0
3.5 0
3.75 0
4 0
4.25 0
4.5 0
4.75 0
5 0
5.25 0
5.5 0
5.75 0
6 0
6.25 0
6.5 0
6.75 0
7 0
7.25 0
7.5 0
7.75 0
8 0
8.25 0
8.5 0
8.75 0
9 0
9.25 0
9.5 0
9.75 0
10 0
10.25 0
10.5 0
10.75 0
11 0
11.25 0
11.5 0
11.75 0
12 0
12.25 0
12.5 0
12.75 0
13 0
13.25 0
13.5 0
13.75 0
14 0
14.25 0
14.5 0
14.75 0
15 0
15.25 0
15.5 0
15.75 0
16 0
16.25 0
16.5 0
16.75 0
17 0
17.25 0
17.5 0
17.75 0
18 0
18.25 0
18.5 0
18.75 0
19 0
19.25 0
19.5 0
19.75 0
};
\end{axis}

\end{tikzpicture}
		\input{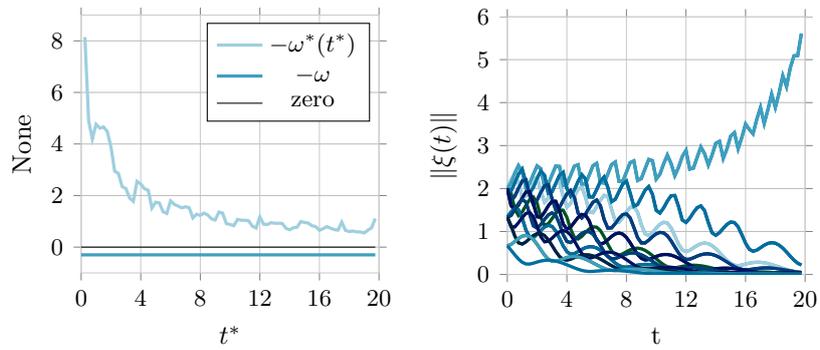}
	\end{center}
	\caption{Estimate of $\omega^*$ and the trajectories for a set of initial values that are not uniformly stable.}
	\label{fig:notstabtrajecs}
\end{figure}

\end{exa}

\section{Stabilization by Updating Riccati Based Feedback} \label{sec:stabsdrefb}
As can be inferred from the sufficient conditions in Theorem \ref{thm:expstabsolsplain} and \ref{thm:locconexpdec} for exponential decay of solutions, a feedback designed for stabilization should be such that the closed loop matrix $A(x)-B\fbg(x)$, cf. \eqref{eq:clsys}, is uniformly stable with respect to the state $x$. In this section we show how one can continuously update an SDRE feedback so that the bounds on the transient behavior and the decay for the closed loop matrix stay constant in a neighborhood. More precisely, if for a given state $x$, an SDRE based feedback renders the system stable with certain stability constants $\mglob$ and $\omglob$, the introduced approach can maintain these constants for small changes in $x$ in the course of the time evolution of the system.

For further reference, we define an abbreviation for the class of considered matrices.
\begin{defin}\label{def:classsmo}
	We say that $A\in\mathbb R^{n,n}$ is in class \smo~for given constants $K$ and $\omega$, if 
	\begin{equation*}
		\norm{e^{ A\tau}}\leq Ke^{-\omega \tau},
	\end{equation*}
	for $\tau > 0$.
\end{defin}


Assume that at the current state $x$, we have $A(x) - BF(x) \in \smo$, where $F(x) = R^{-1}B^\trp P$ and where $P=P(x)$ solves the Riccati equation \eqref{eq:sdre} for given $B\in \mathbb R^{n,\Nu}$, $ R\succ 0\in\mathbb R^{\Nu,\Nu}$, and $Q\succcurlyeq0\in \mathbb R^{n,n} $. Then, we have that
\begin{equation}\label{eq:ricninvarisubs}
	\begin{bmatrix}
		A(x) & -BR^{-1}B^\trp  \\ -Q & A(x)^\trp 
	\end{bmatrix}
	\begin{bmatrix} I \\ P \end{bmatrix} 
=
	\begin{bmatrix} I \\ P \end{bmatrix} Z,
\end{equation}
where $Z = A(x) - BR^{-1}B^\trp P \in \smo$. The following lemma proposes an update of $F$ to account for changes in the system matrix $A(x+x_\Delta)=:A(x)+\Adelta$ induced by a change $x_\Delta$ in the current state $x$.

\begin{thm} \label{thm:updric}
	Consider relation \eqref{eq:ricninvarisubs} with $Z\in \smo$. If for a $\Adelta \in \mathbb R^{n,n}$, there exist $\Qdelta  \in \mathbb R^{n,n}$, $\Rdeltapmo$, and $E \in \mathbb R^{n,n}$ such that  
\begin{equation}\label{eq:updricninvarisubs}
	\begin{bmatrix}
		A(x)+\Adelta & -B[R^{-1}+\Rdeltapmo]B^\trp  \\ -Q-\Qdelta  & A(x)^\trp+\Adelta^\trp   
	\end{bmatrix}
	\begin{bmatrix} I+E \\ P \end{bmatrix} 
=
	\begin{bmatrix} I+E \\ P \end{bmatrix} Z,
\end{equation}
and if $\norm{E}<1$, then $(I+E)$ is invertible and with $\Pdelta := P(I+E)^{-1}$ it holds that
\begin{equation*}
	A(x)+\Adelta - B[R^{-1}+\Rdeltapmo]B^\trp \Pdelta \in \stmo,
\end{equation*}
with $\tilde K = \frac{1+\norm{E}}{1-\norm{E}}K$.
\end{thm}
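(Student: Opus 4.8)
The plan is to observe that the block identity \eqref{eq:updricninvarisubs} is nothing more than the statement that the updated closed-loop matrix is similar to $Z$, the similarity being implemented by $I+E$; the bound $Z\in\smo$ then transfers along this similarity, at the cost of the two operator-norm factors $\norm{I+E}$ and $\norm{(I+E)^{-1}}$. First I would record the consequences of $\norm{E}<1$: by the Neumann series $I+E$ is invertible with $\norm{(I+E)^{-1}}\le(1-\norm{E})^{-1}$, while trivially $\norm{I+E}\le 1+\norm{E}$; this also makes the definition $\Pdelta:=P(I+E)^{-1}$ meaningful.

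Next I would read off the first block row of \eqref{eq:updricninvarisubs},
\[
	(A(x)+\Adelta)(I+E) - B[R^{-1}+\Rdeltapmo]B^\trp P = (I+E)Z ,
\]
and multiply on the right by $(I+E)^{-1}$. Inserting $\Pdelta=P(I+E)^{-1}$ turns this into
\[
	A(x)+\Adelta - B[R^{-1}+\Rdeltapmo]B^\trp \Pdelta = (I+E)\,Z\,(I+E)^{-1} =: \tilde Z ,
\]
so $\tilde Z$ is a genuine similarity transform of $Z$. I would point out that the second block row of \eqref{eq:updricninvarisubs} is not used for the conclusion — it only encodes the compatibility condition that fixes $\Qdelta$ — so the entire argument lives in the top block.

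Finally, from $\tilde Z=(I+E)Z(I+E)^{-1}$ one gets $e^{\tilde Z\tau}=(I+E)e^{Z\tau}(I+E)^{-1}$ for every $\tau>0$, whence, using $Z\in\smo$,
\[
	\norm{e^{\tilde Z\tau}} \le \norm{I+E}\,\norm{e^{Z\tau}}\,\norm{(I+E)^{-1}} \le \frac{1+\norm{E}}{1-\norm{E}}\,K e^{-\omega\tau} = \tilde K e^{-\omega\tau} ,
\]
which is precisely $\tilde Z\in\stmo$ with $\tilde K=\frac{1+\norm{E}}{1-\norm{E}}K$. There is no substantial obstacle here; the only points deserving care are that the perturbation of the transient constant $K$ is governed \emph{solely} by $\norm{E}$, the size of the deformation of the invariant subspace, with $\Adelta$, $\Rdeltapmo$, and $\Qdelta$ entering only through the solvability of \eqref{eq:updricninvarisubs}, and that the decay rate $\omega$ is preserved because the conjugation contributes only the constant factors that get absorbed into $\tilde K$.
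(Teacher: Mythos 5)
Your proof is correct and follows essentially the same route as the paper: invertibility of $I+E$ via the Neumann series, reading the first block row of \eqref{eq:updricninvarisubs} as a similarity $A(x)+\Adelta - B[R^{-1}+\Rdeltapmo]B^\trp \Pdelta = (I+E)Z(I+E)^{-1}$, and transferring the bound on $e^{Z\tau}$ with the factor $\norm{I+E}\,\norm{(I+E)^{-1}}\le \frac{1+\norm{E}}{1-\norm{E}}$. Your version is merely more explicit than the paper's one-line argument (and correctly multiplies by $(I+E)^{-1}$ on the right, where the paper loosely says ``from the left'').
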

\begin{proof}
	Using the \emph{Neumann series} \cite[Exa. I.4.5]{Kat66}, one can infer from $\norm{E}<1$ that $(I+E)$ is invertible and that $\norm{(I+E)^{-1}}\leq \frac 1{1-\norm{E}}$. By multiplying the first block line in \eqref{eq:updricninvarisubs} by $(I+E)^{-1}$ from the left, taking the norm on both sides, recalling that $Z\in\smo$, and estimating $\norm{I+E}\leq 1 + \norm{E}$, we prove the lemma.
\end{proof}

As a consequence of Theorem \ref{thm:updric}, as long as for given $\Adelta$, 
one can find $\Qdelta$, $\Rdeltapmo$, and $E$, with $\norm{E}<c<1$ small enough, one can stabilize $A(x)$ in a neighborhood of $A(x)$ with a constant decay rate $\omega$ and a constant bound on the transient behavior. 

We will use the result of Theorem \ref{thm:updric} to define updates for a given feedback. For further reference, we formulate the situation as a problem.

\begin{prob}\label{prb:updateric}
	Consider the SDC system \eqref{eq:extlinsys} at time $t\geq 0$ and $\xi(t) =: x$. Let $R\succ0$ and $Q\succcurlyeq0$ be given and $P$ satisfy the SDRE \eqref{eq:sdre} so that 
\begin{equation}\label{eq:inprobricninvarisubs}
	\begin{bmatrix}
		A(x) & -BR^{-1}B^\trp  \\ -Q & A(x)^\trp 
	\end{bmatrix}
	\begin{bmatrix} I \\ P \end{bmatrix} 
=
	\begin{bmatrix} I \\ P \end{bmatrix} Z,
\end{equation}
holds for a $Z \in \smo$, and let $0< c < 1$. For a given $\Adelta \in \mathbb R^{\Nx, \Nx}$, find $\Qdelta $, $\Rdeltapmo$, and a corresponding $E$ so that 
\begin{equation}\label{eq:inprobupdricninvarisubs}
	\begin{bmatrix}
		A(x)+\Adelta & -B[R^{-1}+\Rdeltapmo]B^\trp  \\ -Q-\Qdelta  & A(x)^\trp+\Adelta^\trp   
	\end{bmatrix}
	\begin{bmatrix} I+E \\ P \end{bmatrix} 
=
\begin{bmatrix} I+E \\ P \end{bmatrix} Z, \quad \text{and} \quad\norm{E}<c.
\end{equation}
\end{prob}

In what follows, we will address sufficient conditions for the existence of such updates $E$ and how they can be computed.

\begin{lem}\label{lem:epsexistsylvester}
	Consider Problem \ref{prb:updateric}. Any solution $(\Qdelta , \Rdeltapmo, E)$ satisfies 
	\begin{equation}\label{eq:epsexistsylvester}
		(A(x)+\Adelta)E - E Z = -\Adelta + B\Rdeltapmo B^\trp P.
	\end{equation}
	and
	\begin{equation} \label{eq:defqdbyeps}
		-QE - \Qdelta (I + E) - \Adelta^\trp P = 0.
	\end{equation}
	Conversely, for given $0<c<1$, if there exist $\Rdeltapmo $ and $E$ with $\norm{E}<c$ that fulfill \eqref{eq:epsexistsylvester}, then \eqref{eq:defqdbyeps} can be solved for $\Qdelta $ and $(\Qdelta , \Rdeltapmo , E)$ satisfy \eqref{eq:inprobupdricninvarisubs}.
\end{lem}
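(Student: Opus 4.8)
The statement is essentially an algebraic equivalence: the block matrix equation \eqref{eq:inprobupdricninvarisubs} is rewritten block-line by block-line, and the content of Lemma \ref{lem:epsexistsylvester} is obtained by isolating the two scalar (matrix) equations hidden in the two block rows. So the plan is to just expand \eqref{eq:inprobupdricninvarisubs} and massage the two resulting identities, using \eqref{eq:inprobricninvarisubs} to cancel the ``base'' terms.

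First I would expand the top block row of \eqref{eq:inprobupdricninvarisubs}:
\begin{equation*}
	(A(x)+\Adelta)(I+E) - B[R^{-1}+\Rdeltapmo]B^\trp P = (I+E)Z .
\end{equation*}
From the top row of \eqref{eq:inprobricninvarisubs} we have $A(x) - BR^{-1}B^\trp P = Z$, i.e. $A(x) = Z + BR^{-1}B^\trp P$. Substituting this and distributing, the terms $A(x)E$, $Z$, $ZE$ rearrange and the ``base'' pieces cancel, leaving exactly
\begin{equation*}
	(A(x)+\Adelta)E - EZ = -\Adelta + B\Rdeltapmo B^\trp P ,
\end{equation*}
which is \eqref{eq:epsexistsylvester}. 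Then I would expand the bottom block row of \eqref{eq:inprobupdricninvarisubs}:
\begin{equation*}
	-(Q+\Qdelta)(I+E) + (A(x)^\trp + \Adelta^\trp) P = PZ .
\end{equation*}
The bottom row of \eqref{eq:inprobricninvarisubs} gives $-Q + A(x)^\trp P = PZ$. Subtracting this identity from the expanded equation kills $-Q$, $A(x)^\trp P$, and $PZ$, and what remains is precisely
\begin{equation*}
	-QE - \Qdelta(I+E) - \Adelta^\trp P = 0 ,
\end{equation*}
i.e. \eqref{eq:defqdbyeps}. This establishes the forward direction (``any solution satisfies \ldots'').

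For the converse, suppose $\Rdeltapmo$ and $E$ with $\norm{E}<c<1$ satisfy \eqref{eq:epsexistsylvester}. Since $\norm{E}<1$, $(I+E)$ is invertible by the Neumann series (as in the proof of Theorem \ref{thm:updric}), so \eqref{eq:defqdbyeps} can be solved explicitly for $\Qdelta = -(QE + \Adelta^\trp P)(I+E)^{-1}$. It then remains to verify that the triple $(\Qdelta,\Rdeltapmo,E)$ so constructed satisfies \eqref{eq:inprobupdricninvarisubs}; but this is just running the two computations above in reverse — \eqref{eq:epsexistsylvester} together with the top row of \eqref{eq:inprobricninvarisubs} reassembles into the top block row of \eqref{eq:inprobupdricninvarisubs}, and \eqref{eq:defqdbyeps} together with the bottom row of \eqref{eq:inprobricninvarisubs} reassembles into the bottom block row. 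I do not anticipate any real obstacle here: the only non-formal ingredient is the invertibility of $I+E$ under $\norm{E}<1$, which is already available, and otherwise it is a careful but routine bookkeeping of block-matrix products. The one place to be slightly careful is the order of factors (everything is matrix-valued and noncommutative), so I would keep $P$, $E$, $Z$ in their fixed positions throughout and never commute anything.
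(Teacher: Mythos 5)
Your proof takes exactly the paper's approach: the paper's own proof simply asserts that \eqref{eq:updricninvarisubs} is ``equivalent to'' \eqref{eq:epsexistsylvester} and \eqref{eq:defqdbyeps} once $P$ solves \eqref{eq:ricninvarisubs}, and handles the converse via invertibility of $I+E$; you fill in the block-row bookkeeping, and your treatment of the first block row and of the converse is correct. One caveat: carrying out the subtraction in the second block row carefully gives
\begin{equation*}
	-QE - \Qdelta(I+E) + \Adelta^\trp P = 0,
\end{equation*}
i.e.\ with a plus sign in front of $\Adelta^\trp P$, not the minus sign appearing in \eqref{eq:defqdbyeps}; the sign in the paper's statement looks like a typo, and your claim that the expansion leaves ``precisely'' the stated equation glosses over this. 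The discrepancy is harmless for the substance of the lemma --- either way the equation is uniquely solvable for $\Qdelta$ once $I+E$ is invertible --- but it should be flagged rather than silently reproduced.
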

\begin{proof}
	With $P$ solving \eqref{eq:ricninvarisubs}, the updated system \eqref{eq:updricninvarisubs} is equivalent to \eqref{eq:epsexistsylvester} and \eqref{eq:defqdbyeps}. Conversely, if there is a solution $E$ to \eqref{eq:epsexistsylvester}, then the first block line in \eqref{eq:inprobupdricninvarisubs} is satisfied. If also $\norm{E}<1$, then $1+E$ is invertible and there is a unique $\Qdelta $ so that \eqref{eq:defqdbyeps} and, thus, the second block line of \eqref{eq:inprobupdricninvarisubs} are fulfilled.
\end{proof}
According to Lemma \ref{lem:epsexistsylvester}, a desired solution $E$ to \eqref{eq:updricninvarisubs}, namely an $E$ with $\norm{E} < 1$, is always solely defined by \eqref{eq:epsexistsylvester}. Thus, solvability of \eqref{eq:epsexistsylvester} is the key for applying the approach of updating the initial Riccati based feedback.

Equation \eqref{eq:epsexistsylvester} is a \emph{Sylvester} equation \cite[Ch. 16]{Hig02} that can be written as
\begin{equation}\label{eq:sylvestlineq}
	\sylop(A(x)+\Adelta, -Z)\vecop(E) = \vecop(-\Adelta + B\Rdeltapmo B^\trp P), 
\end{equation}
where $\sylop(A_1, A_2) : = A_1 \otimes I - I \otimes A_2$ and $\vecop$ is the operator that stacks the columns of a matrix into a long vector. For given $A_1$ and $A_2$, the \emph{Sylvester} operator $\sylop$ is invertible, if and only if the spectra of $A_1$ and $A_2$ do not have a common eigenvalue.

In the considered case, there is no guarantee that the spectra of $A(x)+\Adelta$ and $-Z$ are disjoint. Thus, we can not state unique existence of solutions. If $A(x)+\Adelta$ and $-Z$ share an eigenvalue, then the associated $\sylop$ is rank-deficient. Then Equation \eqref{eq:epsexistsylvester} has a solution, or better infinitely many solutions, only if the inhomogeneity is consistent. Based on these considerations, we propose two practical approaches to such a solution $E$.
\begin{enumerate}
	\item Solve \eqref{eq:epsexistsylvester} with $\Rdeltapmo =0$. If this fails, then the linear operator $\sylop$ is not invertible and $\Adelta$ is not in the range of $\sylop$. One can try whether for a small second summand $-\Adelta + B\Rdeltapmo B^TP$ is consistent. However, since $B$ typically has only a few columns, this is only a low-rank update which is unlikely to fix the inconsistency in general.
	\item If \eqref{eq:epsexistsylvester} is not solvable, one may solve the perturbed system
	\begin{equation}\label{eq:epsexistsylvpert}
		(A(x)+\Adelta-BR^{-1}B^\trp P)E + E Z = -\Adelta + B\Rdeltapmo B^\trp P,
	\end{equation}
	which is hopefully a slight perturbation, if $E$ is small. If $\Adelta$ is small, then Equation \eqref{eq:epsexistsylvpert} has a unique solution since $BR^{-1}B^\trp P$ was stabilizing $A$ and also $Z$ has only eigenvalues with a negative real part. 
\end{enumerate}
Another issue is the smallness of the update $E$ -- a second crucial ingredient of the approach. If we assume that $\sylop$ is invertible, then the norm of the update is readily estimated by
\begin{equation} \label{eq:nepsisinvprhs}
	\norm{E}_F \leq \norm{\sylop ^{-1}}_2 \norm{\sylrhs} _F.
\end{equation}
Relation \eqref{eq:nepsisinvprhs} is also what the general perturbation estimates given in \cite[Eq. (16.23), (16.25)]{Hig02} reduce to in the considered case.

At a first glance, the smallness of $\sylrhs= -\Adelta + B\Rdeltapmo B^\trp P$ induces a small $E$. The freedom in the choice of $\Rdeltapmo $ can be used to further optimize the solution. Either through minimizing the norm of $\sylrhs$, which is probably not optimal in terms of a minimal norm $E$ but which comes with the a-priori estimate \eqref{eq:nepsisinvprhs}, or through minimizing the solution in an optimization setup. The latter optimization approach may also be be employed if $\sylop$ is not invertible, provided that one can guarantee a consistent right hand side for all considered choices of parameters.

 Estimates for $\norm{\sylop ^{-1}}_2$ may be obtained as follows. The direct approach would be to compute the largest singular value of $\sylop ^{-1}$ that defines the considered spectral norm of $\sylop ^{-1}$ e.g. via the \emph{power method} \cite{GhaL95}. Alternative ways are given by virtue of the equality of the smallest singular value of $\sylop(A_1, A_2)$ to the so called \emph{separation} of $A_1$ and $A_2$:
\begin{equation*}
	\sep(A_1, A_2) = \min_X \frac{\norm{A_1X - XA_2}_F}{\norm{X}_F},
\end{equation*}
cf. \cite{Var79}, e.g., via an algorithm reported in \cite{Bye84} that bases on \emph{Schur} decompositions and that has been implemented, e.g., in the \textsf{SB04OD} subroutine of \emph{SLICOT} \cite{BenMSVV99}.

\section{Numerical Examples}\label{sec:numexa}
We consider the \emph{5D example} that was considered in \cite[Ch. 3.4]{BanLT07} and which writes as an SDC system $\dot \xi = A\xi + Bu$ like

\begin{subequations}\label{eq:sdc5dexa}
	\begin{align}
		\dsvecwu &= 
		\begin{bmatrix}
			0 & 1 & 0 & 0 & 0 \\ 
			0 & 0 & 1 & 0 & 0 \\ 
			0 & 0 & 0 & \svecf^2 & 0 \\ 
			-\sveco & 0 & 0 & \svecf^2 & 0 \\ 
			0 & 0 & 0 & 0 & 0 
		\end{bmatrix}\svecwu
		+
		\begin{bmatrix}
			0 & 0 \\ 0 & 0 \\ 1 & 0 \\ 0 & 0 \\ 0 & 1
		\end{bmatrix}
		u,
		\quad \xi(0)=x_0 \in \mathbb R^{5}.
		\intertext{We add the observation $\eta = C\xi$, defined as}
		\eta &= 
		\begin{bmatrix}
			1 & 0 & 0 & 0 & 0 \\ 
			0 & 0 & 0 & 1 & 0 
		\end{bmatrix}\svecwu.
		\label{eq:5dexa_obs}
	\end{align}
\end{subequations}
Note that with the chosen input and output operators the system is controllable and observable independent of the state $\xi(t)$ so that, in particular, at every state $x$ there exists a feedback that stabilizes the matrix $A(x)$. We compute stabilizing feedbacks by means of the SDRE \eqref{eq:sdre} and the update scheme that was defined through Theorem \ref{thm:updric}.
 
In the first approach, that we will denote by \sdre, we use only the SDRE based feedback which requires the solution of a Riccati equation at every stage of the numerical integration. In the second approach, referred to as \sylvup, we update the initial SDRE feedback according to Theorem \ref{thm:updric}. If the norm of the current update $E$ exceeds a threshold $\epsilon<1$, we reset the base feedback $P$ with the solution of the SDRE at the current state $x$. 

The parameters for the definition of the SDRE feedback and the updates are set to
\begin{equation*}
	R = 10^{-3}I_{2\times 2}, \quad Q=C\trsp C, \quad\text{and}\quad \Rdeltapmo=0.
\end{equation*}

We use \scipy's built-in integrator \textsf{odeint} with the absolute and relative accuracy tolerances set to $10^{-6}$ to integrate the closed loop system on $(0, 3]$, starting from the initial value $$x_0 = \begin{bmatrix} -1.3 & -1.4 & -1.1 & -2.0 & 0.3 \end{bmatrix}^\trp.$$ This initial value is different from the one used in \cite{BanLT07} for which the initial solution of the SDRE applied as a static feedback already stabilizes the trajectory. 

	As illustrated in Figure \ref{fig:stab_5dbnlt}(b), without stabilization, the system blows up in a short time, while with stabilization, the trajectories approach zero. This successful stabilization was achieved for the \sdre~case as well as for the \sylvup~case for varying update thresholds $\epsilon$. In the \sylvup~approach, during the time integration, Sylvester equations are solved in order to update the feedback to bound the variation in $K$, cf. Theorem \ref{thm:updric} and Lemma \ref{lem:epsexistsylvester}, and to keep the decay rate piecewise constant, cf. Figure \ref{fig:stab_5dbnlt}(a). Note that $\epsilon=0$ corresponds to the \sdre~scenario and that the jumps occur where $\norm{E}$ exceeds $\epsilon$ and where the \sylvup~scheme is reinitiated with the current SDRE solution.

	Apart from allowing for application of the theoretical results of Section \ref{sec:extlinstab}, the \sylvup~approach comes with the advantage over \sdre~that mainly Sylvester equations are solved instead of Riccati equations. In the considered five dimensional setup, the solution of the Sylvester equation \eqref{eq:epsexistsylvester} using \textsf{scipy.linalg.solve\_sylvester} takes about $100\mu s$ which is much less time than $182 \mu s$ that is needed by \textsf{scipy.linalg.solve\_continuous\_are} to solve the associated Riccati equation \eqref{eq:sdre}. The additional effort to compute $B^\trp P(I+E)^{-1}$ in each time step is $12\mu s$ and comparatively small. 
	
\begin{figure}[tbp]
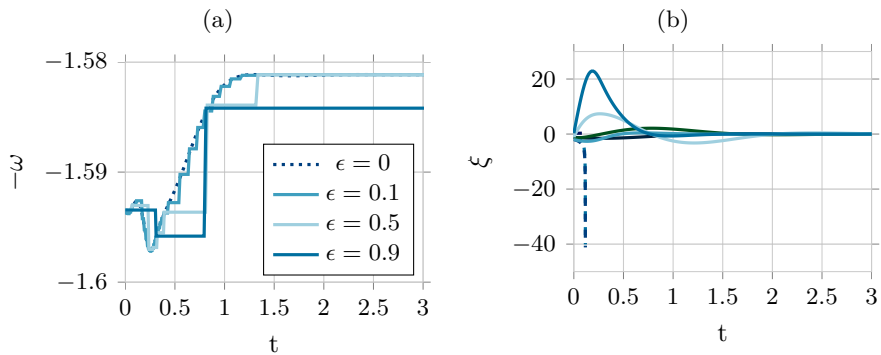

  \pgfplotsset{footnotesize}
	\setlength\figureheight{4.5cm}
	\setlength\figurewidth{5.5cm}
	\subfiguretopcaptrue
	 \begin{center}
		\subfigure[]{\input{pics/omegs.tikz}}
		\subfigure[]{\input{pics/soltrajcs.tikz}}
	\end{center}
	\caption{(a): The decay rate $-\omega$ of the closed loop matrix over time $t$ for varying $\epsilon$ and (b): the trajectories $\xi$ of the stabilized (solid lines) and of the uncontrolled (dashed lines) system \eqref{eq:sdc5dexa}.}
	\label{fig:stab_5dbnlt}
\end{figure}

	In terms of the overall computation time, however, the \sdre~approach outperforms the \sylvup~procedure in the presented example. Here, the generally faster computation of the feedback is compensated by the additional number of time steps that was required by the integrator to achieve the same accuracy. We observe that for smaller thresholds $\epsilon$, which cause more sudden changes in the feedback matrix, the integrator needs more function evaluations due to less smoothness in the system, cf. Table \ref{tab:updsdreperf}. Nevertheless, as we show in a second example, for larger systems, for which the differences in the computational complexity between the linear Sylvester and the nonlinear Riccati equation is much more significant, the \sylvup~will be more economic also in the overall costs.

	\begin{table}[tbp]
		\centering
		\begin{tabular}{l|cccc}
			Scheme & $\epsilon$ & \textsf{\#fb-switches} & \textsf{\#f-eva} &  \textsf{comp-time} \\
			\hline
\sdre     & $0$ & ---		& $245$		& $0.054s$\\
\sylvup & $0.1$ & $32$	& $1287$	& $0.271s$\\
\sylvup & $0.5$ & $7$		& $521$		& $0.110s$\\
\sylvup & $0.9$ & $2$		& $374$		& $0.078s$\\
		\end{tabular}
		\caption{Influence of $\epsilon$ on the number of switches \textsf{\#fb-switches} in the feedback definition, on the number of function evaluations \textsf{\#f-eva} in the time integrator, and on the overall computation time \textsf{comp-time} for the simulation of the \emph{5D example} \eqref{eq:sdc5dexa}.}
		\label{tab:updsdreperf}
	\end{table}

 As a second example, we consider the \emph{Chaffee Infante} equation, which is an autonomous PDE. Precisely, for the spatial coordinate $z\in(0,2)$ and time $t\in(0,3]$, we consider
 \begin{subequations}
	 \label{eq:schloegl}
	 \begin{align}
		 \dot \xi& = \partial_{zz}\xi+ 5(1 - \xi^2)\xi
		 \intertext{with boundary conditions}
		 \xi (t) \bigr|_{z=0}=0 \quad&\text{and}\quad \partial_z \xi(t)\bigr|_{z=2}=u(t)
		 \intertext{and the initial value}
	\inix &= 0.2\sin(0.5\pi z). 
	 \end{align}
 \end{subequations}
 It is known that the equilibrium point $\xi=0$ of \eqref{eq:schloegl} is unstable and that the solution for any $\inix\neq 0$ converges to one of two stable equilibria; cf. \cite{Alt14}. We discretize \eqref{eq:schloegl} by a finite-element scheme using \emph{FEniCS} \cite{LogOeRW12} and $N$ equally distributed linear hat functions which leads to an SDC system with $N$ degrees of freedom in the state and a single input. The output matrix $C\in \mathbb R^{5,N}$ is defined to observe the solution at the spatial locations $z=0$, $z=0.5$, $z=1$, $z=1.5$, and $z=2$. The parameters are chosen as $Q=C^\trp C$, $R=10^{-1}$, and $\Rdeltapmo=0$. We use \textsf{scipy.integrate.odeint} to integrate the closed-loop system as in the previous examples. Since one deals with a finite element discretization, one should use the norm induced by the corresponding mass matrix to compare the errors independently of the discretization. We mimic this scaling in the norms by scaling the prescribed tolerances $10^{-6}$ with the inverse of the elements length $2/N$.

 Both the \sylvup~and the \sdre~stabilization successfully force the system into the unstable zero state as illustrated in Figure \ref{fig:stab_schloegl}. As expected, for ever larger $N$, i.e. ever larger system sizes, the advantage of solving linear updates in the \sylvup~scheme over solving nonlinear Riccati equations in the \sdre~scheme becomes increasingly evident; cf. Table \ref{tab:updsdreperfschloegl}.

	\begin{table}[tbp]
		\centering
		\begin{tabular}{l|cccc}
			Scheme & $\epsilon$ & \textsf{\#fb-switches} & \textsf{\#f-eva} &  \textsf{comp-time} \\
			\hline
			&\multicolumn{4}{c}{$N=20$} \\ 
			\hline

\sdre		& $0$		& ---	& $442$		& $1.921s$\\
\sylvup & $0.5$ & $2$ & $838$		& $3.266s$\\
\sylvup & $0.9$ & $0$ & $451$		& $1.756s$\\
			\hline
			&\multicolumn{4}{c}{$N=40$} \\ 
\hline
\sdre	  & $0$		& --- &	$849$		& $6.267s$ \\
\sylvup & $0.5$ & $3$ & $1936$	& $10.000s$\\
\sylvup & $0.9$ & $1$ & $1186$	& $6.140s$ \\

			\hline
			&\multicolumn{4}{c}{$N=60$} \\ 
\hline
\sdre	  & $0$		& --- &	$1194$	& $15.426s$\\
\sylvup & $0.5$ & $4$ & $2240$	& $18.379s$\\
\sylvup & $0.9$ & $2$ & $1770$	& $14.140s$\\

			\hline
			&\multicolumn{4}{c}{$N=80$} \\ 
		\hline
\sdre	  & $0$			& --- & $1589$	& $42.088s$\\
\sylvup & $0.5$		& $6$ & $2953$	& $35.840s$\\
\sylvup & $0.9$		& $3$ & $2096$	& $25.486s$\\
			\hline
			&\multicolumn{4}{c}{$N=100$} \\ 
		\hline
\sdre	  & $0$			& --- & $2106$	& $90.148s$\\
\sylvup & $0.5$		& $7$ & $3778$	& $68.080s$\\
\sylvup & $0.9$		& $4$ & $2423$	& $43.816s$\\

		\end{tabular}
		\caption{Influence of $\epsilon$ on the number of switches \textsf{\#fb-switches} in the feedback definition, on the number of function evaluations \textsf{\#f-eva} in the time integrator, and on the overall computation time \textsf{comp-time} for the simulation of the stabilized \emph{Chaffee Infante} equation \eqref{eq:schloegl} with finite element discretizations on varying mesh sizes $N$.}
		\label{tab:updsdreperfschloegl}
	\end{table}

	\begin{figure}
		\pgfplotsset{width=.4\textwidth}
		\begin{tikzpicture}
			\begin{axis}[
				point meta min=-.7, point meta max=1.,
				zmin=-1, zmax=1.,
				xlabel=$z$,
				ylabel=$t$,
				ytick={1, 2, 3},
				xtick={0, 1, 2},
				xticklabels={2, 1, 0},
				zlabel={$\xi(t, z)$}
				]
				\addplot3[surf,mesh/ordering=y varies]
				table {pics/schloegl-unco.dat};
			\end{axis}
		\end{tikzpicture}
		\begin{tikzpicture}
			\begin{axis}[
				point meta min=-.7, point meta max=1.,
				zmin=-1, zmax=1.,
				xlabel=$z$,
				ylabel=$t$,
				ytick={1, 2, 3},
				xtick={0, 1, 2},
				xticklabels={2, 1, 0},
				zlabel={$\xi(t, z)$}
				]
				\addplot3[surf,mesh/ordering=y varies]
				table {pics/schloegl-sylvupd.dat};
			\end{axis}
		\end{tikzpicture}
		\caption{The uncontrolled (left) and the stabilized (right) evolution of the solution to the \emph{Chaffee Infante} equation \eqref{eq:schloegl}.}
		\label{fig:stab_schloegl}
	\end{figure}
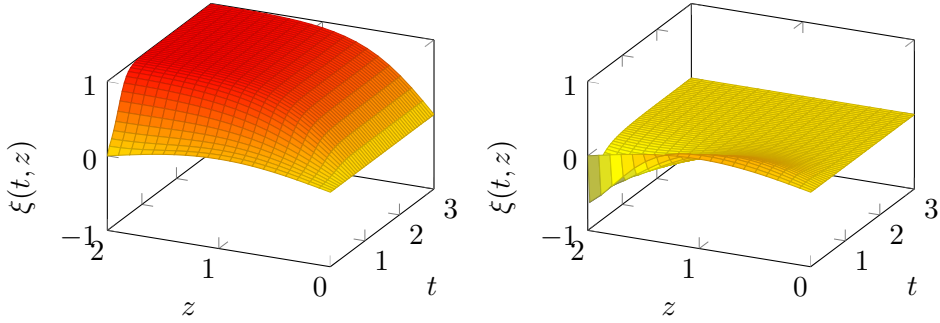

	The code and information on the system architecture used for the tests is available from the public \emph{git} repository \cite{Hei16_ext-lin-stab}.

\section{Conclusion}
We analysed the stability of trajectories $\xi$ of an SDC system like \eqref{eq:extlinsys} based on properties of the spectrum of $A(\xi(t))$. The straight-forward adaptation of known sufficient conditions for linear time-varying systems came with strong global assumptions that are unlikely to be fulfilled. Taking into account that the coefficient function $\xi(t) \mapsto A(\xi(t))$ is stabilized together with the trajectory, we derived sufficient conditions for stability that can be checked locally. In view of using the obtained theoretical results for feedback stabilization, we developed an update scheme that ensures uniform decay rates and bounds on the transient behavior of the closed-loop SDC system matrix. The usability of the sufficient conditions and the efficiency of the approach to stabilization via updating an initial feedback was illustrated in numerical examples.

By now, in the numerical examples as well as in the theoretical investigations, we have not considered the potentials for optimization within the derived approaches. For example, the freedom in the choice of the weighting matrix perturbation $\Rdeltapmo$ may well be used to optimize the feedback update $E$. Additionally, it might be worth investigating whether structural assumptions on the changes $\Delta$ in the coefficient matrices can be exploited to provide feedback updates of, e.g., low-rank.

\bibliographystyle{abbrv}
\bibliography{extlinstab}  

\begin{thebibliography}{10}

\bibitem{Alt14}
N.~Altm{\"u}ller.
\newblock {\em Model Predictive Control for Partial Differential Equations}.
\newblock PhD thesis, Universit\"at Bayreuth, Bayreuth, 2014.

\bibitem{BanLT07}
H.~Banks, B.~Lewis, and H.~Tran.
\newblock Nonlinear feedback controllers and compensators: a state-dependent
  {R}iccati equation approach.
\newblock {\em Comput. Optim. Appl.}, 37(2):177--218, 2007.

\bibitem{BaeBSetal15}
E.~B{\"a}nsch, P.~Benner, J.~Saak, and H.~K. Weichelt.
\newblock {R}iccati-based boundary feedback stabilization of incompressible
  {N}avier-{S}tokes flows.
\newblock {\em {SIAM} J. Sci. Comput.}, 37(2):A832--A858, 2015.

\bibitem{BauR86}
W.~Baumann and W.~J. Rugh.
\newblock Feedback control of nonlinear systems by extended linearization.
\newblock {\em {IEEE} Trans. Automat. Control}, 31(1):40--46, 1986.

\bibitem{BenH15}
P.~Benner and J.~Heiland.
\newblock {LQG}-{B}alanced {T}runcation low-order controller for stabilization
  of laminar flows.
\newblock In R.~King, editor, {\em Active Flow and Combustion Control 2014},
  volume 127 of {\em Notes on Numerical Fluid Mechanics and Multidisciplinary
  Design}, pages 365--379. Springer, Berlin, 2015.

\bibitem{BenMSVV99}
P.~Benner, V.~Mehrmann, V.~Sima, S.~V. Huffel, and A.~Varga.
\newblock {SLICOT} - a subroutine library in systems and control theory.
\newblock In B.~N. Datta, editor, {\em Applied and Computational Control,
  Signals, and Circuits}, volume~1, chapter~10, pages 499--539. Birkh{\"a}user,
  Boston, MA, 1999.

\bibitem{BreK14}
T.~Breiten and K.~Kunisch.
\newblock {R}iccati-based feedback control of the monodomain equations with the
  {F}itzhugh--{N}agumo model.
\newblock {\em {SIAM} J. Cont. Optim.}, 52(6):4057--4081, 2014.

\bibitem{Bye84}
R.~Byers.
\newblock A {LINPACK}-style condition estimator for the equation {$AX-XB^{T} =
  C$}.
\newblock {\em {IEEE} Trans. Automat. Control}, 29(10):926--928, 1984.

\bibitem{Cim12}
T.~{\c{C}}imen.
\newblock Survey of state-dependent {R}iccati equation in nonlinear optimal
  feedback control synthesis.
\newblock {\em J. Guidance, Cont., and Dynamics}, 35(4):1025 -- 1047, 2012.

\bibitem{GhaL95}
A.~R. Ghavimi and A.~J. Laub.
\newblock Backward error, sensitivity, and refinement of computed solutions of
  algebraic {R}iccati equations.
\newblock {\em Numer. Lin. Alg. Appl.}, 2(1):29--49, 1995.

\bibitem{Hei16_ext-lin-stab}
J.~Heiland.
\newblock ext-lin-stab -- a {P}ython module for the stabilization of {SDC}
  systems via {SDRE} based feedback, 2016.
\newblock \url{https://gitlab.mpi-magdeburg.mpg.de/heiland/code-ext-lin-stab}.

\bibitem{Hig02}
N.~J. Higham.
\newblock {\em Accuracy and Stability of Numerical Algorithms}.
\newblock {SIAM} Publications, Philadelphia, PA, second edition, 2002.

\bibitem{HilI11}
A.~T. Hill and A.~Ilchmann.
\newblock Exponential stability of time-varying linear systems.
\newblock {\em {IMA} J. Numer. Anal.}, 31(3):865--885, 2011.

\bibitem{Kat66}
T.~Kato.
\newblock {\em Perturbation Theory for Linear Operators}.
\newblock Springer, New York, NY, 1966.

\bibitem{LogOeRW12}
A.~Logg, K.~B. \O{}lgaard, M.~E. Rognes, and G.~N. Wells.
\newblock {FFC}: the {FEniCS} form compiler.
\newblock In {\em Automated Solution of Differential Equations by the Finite
  Element Method}, pages 227--238. Springer, Berlin, Germany, 2012.

\bibitem{MraC98}
C.~P. Mracek and J.~R. Cloutier.
\newblock Control designs for the nonlinear benchmark problem via the
  state-dependent {R}iccati equation method.
\newblock {\em Internat. J. Robust and Nonlinear Cont.}, 8(4-5):401--433, 1998.

\bibitem{Ray06}
J.-P. Raymond.
\newblock Feedback boundary stabilization of the two-dimensional
  {N}avier-{S}tokes equations.
\newblock {\em {SIAM} J. Cont. Optim.}, 45(3):790--828, 2006.

\bibitem{Rug84}
W.~J. Rugh.
\newblock Design of nonlinear compensators for nonlinear systems by an extended
  linearization technique.
\newblock In {\em The 23rd IEEE Conference on Decision and Control}, pages
  69--73, 1984.

\bibitem{Rug96}
W.~J. {Rugh}.
\newblock {\em Linear System Theory}.
\newblock Prentice Hall, Upper Saddle River, NJ, 2nd ed. edition, 1996.

\bibitem{Var79}
J.~M. Varah.
\newblock On the separation of two matrices.
\newblock {\em {SIAM} J. Numer. Anal.}, 16(2):216--222, 1979.

\end{thebibliography}
\end{document}